\theoremstyle{definition}
\newtheorem{theorem}{Theorem}[section]
\newtheorem{lemma}[theorem]{Lemma}
\newtheorem{definition}[theorem]{Definition}
\newtheorem{remark}[theorem]{Remark}
\newtheorem{corollary}[theorem]{Corollary}
\newcommand*\diff{\mathop{}\!\mathrm{d}}
\theoremstyle{definition}
\newtheorem*{res}{Main result}
\newtheorem*{plan}{Plan}
\newcommand\ackname{Acknowledgements}
   \newenvironment{acknowledgements}{%
       \titlepage
       \null\vfil
       \@beginparpenalty\@lowpenalty
       \begin{center}%
         \bfseries \ackname
         \@endparpenalty\@M
       \end{center}}%
      {\par\vfil\null\endtitlepage}
   \newenvironment{acknowledgements}{%
       \if@twocolumn
         \section*{\abstractname}%
       \else
         \small
         \begin{center}%
           {\bfseries \ackname\vspace{-.5em}\vspace{\z@}}%
         \end{center}%
         \quotation
       \fi}
       {\if@twocolumn\else\endquotation\fi}
\title{On the convergence of stochastic transport equations to a deterministic parabolic one}
\author{Lucio Galeati}
\date{\small{Institute of Applied Mathematics\\ University of Bonn, Germany}\\[1.2ex] lucio.galeati@iam.uni-bonn.de\\[1.5ex] \today}
\begin{document}
\maketitle
\thispagestyle{empty}
\begin{abstract}
\noindent A stochastic transport linear equation (STLE) with multiplicative space-time dependent noise is studied. It is shown that, under suitable assumptions on the noise, a multiplicative renormalization leads to convergence of the solutions of STLE to the solution of a deterministic parabolic equation. Existence and uniqueness for STLE are also discussed. Our method works in dimension $d\geq 2$; the case $d=1$ is also investigated but no conclusive answer is obtained. 
\end{abstract}
\section{Introduction}\label{section 1 - introduction}

%
%

Throughout this paper, we consider a stochastic transport linear equation of the form
\begin{equation}\label{sec 1 - STLE in compact Stratonovich form}\tag{STLE}
\diff u = b\cdot\nabla u \diff t + \circ \diff W\cdot \nabla u,
\end{equation}
where $b=b(t,x)$ is a given deterministic function and $W=W(t,x)$ is a space-time dependent noise of the form
\begin{equation}\label{sec 1 - explicit description of the noise}
W(t,x)=\sum_k \sigma_k(x)W_k(t).
\end{equation}
Here $\sigma_k$ are smooth, divergence free, mean zero vector fields, $\{W_k\}_k$ are independent standard Brownian motions and the index $k$ might range on an infinite (countable) set; by \eqref{sec 1 - STLE in compact Stratonovich form} we mean more explicitly the identity
\begin{equation}\label{sec 1 - STLE in non compact Stratonovich form}
\diff u = b\cdot \nabla u \diff t + \sum_k \sigma_k\cdot\nabla u \circ \diff W_k,
\end{equation}
where $\circ$ denotes Stratonovich integral. Let us explain the reasons for studying such equation.

%
%

In the case of space-independent noise, it has been shown in recent years, starting with \cite{FlaGub}, that equation \eqref{sec 1 - STLE in compact Stratonovich form} is well posed under much weaker assumptions on $b$ than its deterministic counterpart (i.e. with $W=0$), for which essentially sharp condition are given by \cite{DiPLio}, \cite{Amb}. There is now an extensive literature on the topic of regularization by noise for transport equations, see the review \cite{Fla2} and the references in \cite{BecFla}. However, from the modelling point of view, space-independent noise is too simple, since formally the characteristics associated to \eqref{sec 1 - STLE in compact Stratonovich form} are given by
\begin{equation}\nonumber
\diff X_t = -b(t,X_t) \diff t - \diff W_t.
\end{equation}
Namely, if we interpret $u$ as an ensemble of ideal particles, the addition of such a multiplicative Stratonovich noise corresponds at the Lagrangian level to non interacting particles being transported by a drift $b$ as well as a random, space independent noise $W$. There are several models, especially those arising in turbulence (see \cite{Cha} and the discussion in the introduction of \cite{CogFla}), in which it seems more reasonable to consider all the particles to be subject to the same space-dependent, environmental noise $W$, which is randomly evolving over time and is not influenced by the particles; $W$ may be interpreted as an incompressible fluid in which the particles are immersed. The formal Lagrangian description of \eqref{sec 1 - STLE in compact Stratonovich form} is
\begin{equation}\label{sec 1 - stochastic characteristics associated, compact Stratonovich form}
\diff X_t = -b(t,X_t)\diff t - \circ \diff W(t,X_t),
\end{equation}
where the above equation is meaningful once we consider $W$ given by \eqref{sec 1 - explicit description of the noise} and we explicit the series.

Another reason to consider a more structured noise is given by the fact that, in the case of nonlinear transport equations, explicit examples in which a space-independent noise doesn't regularize are known, see for instance Section 4.1 of \cite{Fla}; instead a sufficiently structured, space-dependent noise can provide a partial regularization by avoiding coalescence of particles, as in \cite{DFV}, \cite{FlaGub2}.

Finally, if we expect the paradigm \textquotedblleft the rougher the noise, the better the regularization\textquotedblright\ to hold, as it has been observed frequently in regularization by noise phenomena, it is worth to investigate the effect on equation \eqref{sec 1 - STLE in compact Stratonovich form} of a noise $W$ which has poor regularity in space.

Specifically, the main goal of this work is not to investigate well posedness of \eqref{sec 1 - STLE in compact Stratonovich form}, but rather to understand what happens when the space regularity of $W$ is so weak that it's not clear how to give meaning to \eqref{sec 1 - STLE in compact Stratonovich form} anymore. Indeed, when one writes the corresponding It\^o formulation of \eqref{sec 1 - STLE in compact Stratonovich form}, the It\^o-Stratonovich corrector appearing is finite only if $W$ satisfies a condition of the form
\begin{equation}\label{sec 1 - condition on the regularity of the noise}
\mathbb{E}\Big[\vert W(1,\cdot)\vert_{L^2}^2\Big]<\infty.
\end{equation}
In particular, if the above condition doesn't hold, typically the corrector will be of the form \textquotedblleft $+\infty\,\Delta u$\textquotedblright\ and therefore heuristically one would expect the solution to istantaneously dissipate and become constant, independently of the initial data. A rigorous proof of this assertion, by means of a Galerkin approximation, has been given in a specific case in \cite[Theorem 1.3]{FlaLuo}, but the technique applied there seems sufficiently robust to be generalized to this setting as well. It turns out that, in order to obtain a non trivial limit when we consider solutions of \eqref{sec 1 - STLE in compact Stratonovich form} for a sequence of noises $W^N$ whose $L^2$-norm is exploding as $N\to\infty$, a suitable sequence of multiplicative coefficients $\varepsilon^N$ must be introduced. In order to explain better what we mean and to give a rough statement of the main result, we give a  brief description of the setting in which we study \eqref{sec 1 - STLE in compact Stratonovich form}. More details will be given in the next section.

%
%

We consider everything to be defined on the $d$-dimensional torus $\mathbb{T}^d = \mathbb{R}^d/(2\pi\mathbb{Z}^d)$ with periodic boundary condition, $d\geq 2$, with suitable assumptions on $b$.
We denote by $\mathcal{H}$ the closed subspace of $L^2(\mathbb{T}^d;\mathbb{R}^d)$ given by divergence free, mean-zero functions (see Section \ref{subsection 2.1 - notation and functional setting} for the exact definition).

We fix an a priori given filtered probability space $(\Omega,\mathcal{F},\mathcal{F}_t,\mathbb{P})$ on which an $\mathcal{H}$-cylindrical $\mathcal{F}_t$-Wiener process $\widetilde{W}$ is defined, see \cite{DaP}. We apply to $\widetilde{W}$ a Fourier multiplier $\Theta$ such that $W:= \Theta\widetilde{W}$ satisfies \eqref{sec 1 - condition on the regularity of the noise}. We consider, for this choice of $W$, the Cauchy problem given by \eqref{sec 1 - STLE in compact Stratonovich form} together with a deterministic initial condition $u_0\in L^2(\mathbb{T}^d)$; we are interested in energy solutions $u$, namely $\mathcal{F}_t$-progressively measurable processes, with weakly continuous paths, for which equation \eqref{sec 1 - STLE in compact Stratonovich form} is satisfied when interpreted in an analitically weak sense, i.e. testing against smooth functions, and a suitable energy inequality holds.
We stress that we consider $u$ to be a strong solution in the probabilistic sense; we can vary $W$ by considering different choices of $\Theta$, but the probability space and $\widetilde{W}$ are fixed and a priori given. The main result can then be loosely stated as follows.

\begin{res}\label{result sec 1 - introductory formulation of main theorem}
Assume that $b$ satisfies suitable conditions together with the following assumption:
\begin{itemize}
\item[(UN)] (Uniqueness for the parabolic limit equation) $b$  is such that, for any $\nu>0$, uniqueness holds in the class of weak $L^\infty(0,T;L^2(\mathbb{T}^d))$ solutions of the Cauchy problem
\begin{equation}\label{sec 1 - parabolic limit problem}
\begin{cases} \partial_t u = \nu\Delta u + b\cdot\nabla u\\ u(0)=u_0 \end{cases}.
\end{equation}
\end{itemize}
Then for any $\nu>0$, there exists a class of sequences of Fourier multipliers $\Theta^N$ and of constants $\varepsilon^N$, with $\varepsilon^N$ depending only on $\nu$ and $\Theta^N$ for each $N$, such that, denoting $W^N=\Theta^N\widetilde{W}$, for any $u_0\in L^2(\mathbb{T}^d)$, any sequence of energy solutions $u^N$ of the STLEs
\begin{equation}\label{sec 1 - equations for the approximating u^N}\begin{cases}
\diff u^N = b\cdot\nabla u^N\, \diff t + \sqrt{\varepsilon^N}\circ \diff W^N\cdot \nabla u^N\\
u^N(0)=u_0
\end{cases}\end{equation}
converges in probability (in a suitable topology) as $N\to\infty$ to the unique deterministic solution $u$ of the parabolic equation \eqref{sec 1 - parabolic limit problem}.
\end{res}

A more precise statement and the proof will be given in Section \ref{section 3 - proof of the main result}; let us comment some of the features of the result.

\begin{itemize}
\item[i)] The statement is formulated in the spirit of a multiplicative renormalization: the sequence $\varepsilon^N$ depends on the chosen $\Theta^N$, but the limit does not, up to the arbitrary choice of a one dimensional parameter $\nu>0$. However, as will be discussed in Section \ref{section 3 - proof of the main result}, this is not a real renormalization due to the presence of some degeneracy: while we need to impose some conditions on $\Theta^N$, these do not  imply uniqueness of the limit of $\Theta^N \widetilde{W}$ and explicit examples of choices leading to different limits, for which the above statement holds, can be given. In a sense, the result is more similar to a weak law of large numbers, as will be discussed in Section \ref{section 3 - proof of the main result}.

\item[ii)] The statement provides a sequence of solutions of stochastic transport equations converging to a deterministic parabolic equation. This is rather surprising, not only for the transition from a stochastic problem to a deterministic one, but also for the change in the nature of the equation. The original STLEs are hyperbolic: whenever $W$ is regular enough, they can be solved explicitly by means of the stochastic flow associated to the characteristics \eqref{sec 1 - stochastic characteristics associated, compact Stratonovich form}; in particular the solutions don't have in general better regularity than the initial data, at least at the level of trajectories. However, when considering the corresponding It\^o formulation, the It\^o-Stratonovich corrector gives rise to a Laplacian. It was intuited in \cite{FlaGub} that equation \eqref{sec 1 - STLE in compact Stratonovich form} has some parabolic features at the mean level; this has become clear in \cite{BecFla}.

\item[iii)] The statements holds for \textit{any} sequence of energy solutions of \eqref{sec 1 - equations for the approximating u^N}, even when uniqueness is not known; existence of energy solutions can be shown under suitable assumptions on $b$. We only need well posedness for the limit problem \eqref{sec 1 - parabolic limit problem} and that's why we require (UN) to hold. In general (UN) is satisfied under very mild assumptions on $b$, much weaker than those required for the associated deterministic transport equation to be well posed. This suggests the possibility to obtain uniqueness for the STLE under the same assumption (UN); in this direction, see the results given in \cite{Mau}, \cite{BecFla} and the references therein.

\item[iv)] From the modelling, perturbative viewpoint, the result could be interpreted in this way: when a system of particles transported by a drift $b$ is subject to an environmental background noise which is very irregular but of very small intensity, in the ideal limit such a disturbance is correctly modelled by a diffusive term $\nu\Delta$. This also gives an interesting link between different selection principles for ill posed transport equations, since it hints to the fact that a vanishing viscosity limit and certain types of zero noise limits should behave similarly; observe however that this is not true in general, since in the setting of space-independent noise, examples of transport equations for which the zero noise limit and the vanishing viscosity one do not coincide are provided in \cite{AttFla}.
\end{itemize}

We believe our main result holds on a wider class of domains and not only on the torus, but there are several technical issues which prevent a straightforward generalization and solving them is currently an open problem. Indeed, if the domain is a bounded open subset of $\mathbb{R}^d$, then a boundary condition must also be imposed and handled in the limit; in this regard, let us mention the recent work \cite{Hai}, in which it is shown that in certain scaling regimes (however different from our case) also the boundary condition must undergo a renormalization. If the setting is instead a compact manifold without boundary, the main challenge becomes finding examples of vector fields $\sigma_k$ for which the It\^o-Stratonovich correctors, as well as their limit once properly renormalized, can be computed explicitly. On the torus this task is greatly simplified by the presence of many simmetries, as it is shown in Section \ref{subsection 2.3 - SPDE in Ito form and definition of energy solutions}.

Let us highlight that even if in the discussion we have adopted a perturbative approach, motivating \eqref{sec 1 - STLE in compact Stratonovich form} as a stochastic variation of an originally deterministic problem, the equation is of interested by itself even when $b=0$ as it is related to the theory of passive scalars and the celebrated Kraichnan model of turbulence, see \cite{Cha}, \cite{Fal}. From the mathematical point of view, it has been treated in a very complete but rather technical way in \cite{LeJ}, \cite{LeJ2}; in Section \ref{subsection 4.2 - proof of strong uniqueness in the case b=0} we present a simple proof in the case $b=0$ of pathwise uniqueness of $L^2(\mathbb{T}^d)$-valued solutions under very mild assumptions on the noise (basically all isotropic divergence free noises for which the equation is well defined are included). To the best of our knowledge this result is new, since even in \cite{LeJ2} is suggested but not explicitly stated whether pathwise uniqueness can be proved, see the beginning of Section \ref{subsection 4.2 - proof of strong uniqueness in the case b=0} for more details.

Finally, let us mention the strong similarity between our technique and the one considered in \cite{FlaLuo2}.

%
%

\begin{plan}
The paper is structured as follows: in Section \ref{section 2 - preliminaries}, we introduce our notations and basic definitions; in Section \ref{section 3 - proof of the main result} we give a more precise statement and the proof of the main result. In Section \ref{section 4 - discussion of existence and uniqueness}, in order for the main result to be non vacuous, we give a proof of existence of energy solutions and we discuss the problem of their uniqueness. Finally in Section \ref{section 5 - the case d=1} we treat the case $d=1$, in which we show that we are not able to obtain an equivalent of the main result; still, from the modelling viewpoint, some interesting conclusions can be drawn.
\end{plan}

\section{Preliminaries}\label{section 2 - preliminaries}

%
%

In this section we provide all the notions necessary to give a meaning to \eqref{sec 1 - STLE in compact Stratonovich form} and its solutions; with this set up we will be able to prove the main result in the next section.
\subsection{Notations and functional setting}\label{subsection 2.1 - notation and functional setting}

%
%

We work on the $d$-dimensional torus, $\mathbb{T}^d=\mathbb{R}^d/(2\pi\mathbb{Z}^d)$, with periodic boundary condition. We denote by $L^2(\mathbb{T}^d;\mathbb{C})$ the set of complex-valued, square integrable function defined on $\mathbb{T}^d$, which is a Hilbert space endowed with the (normalized) inner product
\begin{equation}\nonumber
\langle f, g\rangle_{L^2} = \frac{1}{(2\pi)^d}\int_{\mathbb{T}^d} f(x)\overline{g}(x)\diff x,
\end{equation}
where $\overline{z}$ denotes the complex conjugate of $z$, $\vert z\vert^2 = z\,\overline{z}$; we denote by $\vert\cdot\vert_{L^2}$ the norm induced by $\langle\cdot,\cdot\rangle_{L^2}$. Under this inner product, $\{e_k\}_{k\in\mathbb{Z}^d}$ given by $e_k(x)= e^{i\, k\cdot x}$ is a complete orthonormal system ($k\cdot x= \sum_{i=1}^d k_i x_i$ denoting the standard inner product in $\mathbb{R}^d$). Any element $f\in L^2(\mathbb{T}^d;\mathbb{C})$ can be written uniquely in Fourier series as
\begin{equation}\nonumber
f = \sum_{k\in\mathbb{Z}^d} f_k\, e_k, \quad f_k = \langle f, e_k\rangle_{L^2} ,
\end{equation}
where the series is convergent in $L^2(\mathbb{T}^d;\mathbb{C})$ and it satisfies
\begin{equation}\nonumber
\vert f\vert_{L^2}^2 = \sum_{k\in\mathbb{Z}^d} \vert f_k\vert^2.
\end{equation}
An element $f$ is real-valued if and only if $f_{-k}=\overline{f}_k$ for every $k\in\mathbb{Z}^d$. We denote the set of square integrable, real-valued functions by $L^2(\mathbb{T}^d)=L^2$. The formulas above hold more generally for $f\in L^2(\mathbb{T}^d;\mathbb{R}^d)$ if we interpret $f_k$ as the $\mathbb{C}^d$-valued vector with components $f_k^{(j)} = \langle f^{(j)}, e_k\rangle_{L^2}$.

We will always deal with real-valued functions, but for the sake of calculations it is more convenient to use complex Fourier series; for the same reason we work on $\mathbb{T}^d$ defined as above rather than $\mathbb{R}^d/\mathbb{Z}^d$. We stress however that the results are independent of this choice and can be obtained in the same way by using real Fourier series or $\mathbb{R}^d/\mathbb{Z}^d$.

%
%

We consider the Sobolev spaces $H^\alpha(\mathbb{T}^d)$, $\alpha\in\mathbb{R}$, given by
\begin{equation}\nonumber
H^\alpha(\mathbb{T}^d) = \Big\{f=\sum_k f_k\,e_k\, \Big\vert\, f_{-k}=\overline{f}_k,\, \sum_k \big(1+\vert k\vert^2\big)^\alpha \vert f_k\vert^2 <\infty\Big\},
\end{equation}
see \cite{Tem} for more details. Then the space of test functions $C^\infty(\mathbb{T}^d)$ corresponds to $\cap_\alpha H^\alpha(\mathbb{T}^d)$ and its dual $C^\infty(\mathbb{T}^d)'$, the space of distributions, to $\cup_\alpha H^\alpha(\mathbb{T}^d)$. We denote by $\langle\cdot,\cdot\rangle$ also the duality pairing between them.

Given $f\in L^2(\mathbb{T}^d;\mathbb{R}^d)$, we say that $f$ is divergence free in the sense of distributions if
\begin{equation}\nonumber
\langle f, \nabla\varphi\rangle = 0 \qquad \forall\, \varphi\in C^\infty(\mathbb{T}^d).
\end{equation}
It's easy to check that $f$ is divergence free if and only if $f_k\cdot k = 0$ for all $k\in\mathbb{Z}^d$. Consider the subspace
\begin{equation}\nonumber
\mathcal{H}= \bigg\{f\in L^2(\mathbb{T}^d;\mathbb{R}^d) \text{ such that } \int_{\mathbb{T}^d} f =0 \text{ and } f \text{ is divergence free}\bigg\}.
\end{equation}
$\mathcal{H}$ is a closed linear subspace of $L^2(\mathbb{T}^d;\mathbb{R}^d)$ and so the orthogonal projection $\Pi:L^2(\mathbb{T}^d;\mathbb{R}^d)\to \mathcal{H}$ is a linear continuous operator. $\Pi$ can be represented in Fourier series by
\begin{equation}\nonumber
\Pi: f=\sum_{k\in\mathbb{Z}^d} f_k\, e_k \mapsto \Pi f = \sum_{k\in\mathbb{Z}^d} P_k f_k\, e_k,
\end{equation}
where $P_k\in \mathbb{R}^{d\times d}$ is the d-dimensional projection on $k^\perp$, $P_k = I - \frac{k}{\vert k\vert}\otimes \frac{k}{\vert k\vert}$, whenever $k\neq 0$ and we set $P_0\equiv 0$. $\Pi$ can be extended to a continuous linear operator from $H^\alpha(\mathbb{T}^d;\mathbb{R}^d)$ to itself for any $\alpha\in\mathbb{R}$. We also define the projectors $\Pi_N$ on the space of Fourier polynomials of degree at most $N$ by
\begin{equation}\nonumber
f=\sum_k f_k\,e_k\mapsto \Pi_Nf = \sum_{k: \vert k\vert\leq N} f_k\, e_k,
\end{equation}
where $\Pi_N:C^\infty(\mathbb{T}^d)'\to C^\infty(\mathbb{T}^d)$.

\subsection{Construction of the noise $W(t,x)$}\label{subsection 2.2 - contruction of the noise}

%
%

We have introduced the space $\mathcal{H}$ and the projector $\Pi$ because we want to deal with an $\mathcal{H}$-valued noise $W$; the reason for this choice will become clear in Section \ref{subsection 2.3 - SPDE in Ito form and definition of energy solutions}. We are first going to construct $W$ by giving an explicit Fourier representation, but then we will also provide a more elegant, abstract construction.

Set $\mathbb{Z}^d_0=\mathbb{Z}^d\setminus \{0\}$ and consider $\Lambda\subset\mathbb{Z}^d$ such that $\Lambda$ and $-\Lambda$ form a partition of $\mathbb{Z}^d_0$. Consider a collection
\begin{equation}\nonumber
\Big\{ B^{(j)}_k, k\in\mathbb{Z}^d_0, 1\leq j\leq d-1\Big\}
\end{equation}
of standard, real valued, independent $\mathcal{F}_t$-Brownian motions, defined on a filtered probability space $(\Omega,\mathcal{F},\mathcal{F}_t,\mathbb{P})$, $\{\mathcal{F}_t\}_{t\geq 0}$ being a normal filtration (see \cite{Rev}). Define
\begin{equation}\nonumber
W^{(j)}_k := \begin{cases} B^{(j)}_k + iB^{(j)}_{-k}\qquad & \text{if }\ k\in\Lambda\\
B^{(j)}_k - iB^{(j)}_{-k} & \text{if }\ k\in-\Lambda \end{cases}.
\end{equation}
In this way, $\{W^{(j)}_k, k\in\mathbb{Z}^d_0, 1\leq j\leq d-1\}$ is a collection of standard complex valued Brownian motions (namely complex processes with real and complex part given by independent real BM) such that $W^{(j)}_{-k}=\overline{W}^{(j)}_k$ and $W^{(j)}_k, W^{(m)}_l$ are independent whenever $k\neq \pm l$, $j\neq m$. We denote by $[M,N]$ the quadratic covariation process, which is defined for any couple $M$, $N$ of square integrable real semimartingales (see for instance \cite{Rev}) and we extend it by bilinearity to the analogue complex valued processes.
Observe that by result of bilinearity it holds
\begin{equation}\nonumber
\Big[W^{(j)}_k,W^{(j)}_k\Big]_t = 0,\quad \Big[W^{(j)}_k, W^{(j)}_{-k}\Big]_t = 2t,
\end{equation}
and therefore
\begin{equation}\nonumber
\Big[W^{(j)}_k,W^{(m)}_l\Big]_t = 2t\,\delta_{j,m}\,\delta_{k,-l} = 2t\,\delta_{j-m}\,\delta_{k+l}.
\end{equation}
We omit the details, but it's easy to check all the stochastic calculus rules, in particular It\^o formula and It\^o isometry, can be extended by bilinearity to the case of complex valued semimartingales.

For any $k\in\Lambda$, let $\{a_k^{(1)},\ldots, a_k^{(d-1)}\}$ be an orthonormal basis of $k^\perp$. Then $\{k/\vert k\vert, a_k^{(1)},\ldots, a_k^{(d-1)}\}$ form an orthonormal basis of $\mathbb{R}^d$ and it holds
\begin{equation}\nonumber
P_k = a_k^{(1)}\otimes a_k^{(1)} + \ldots +a_k^{(d-1)}\otimes a_k^{(d-1)} \quad \forall\, k\in\Lambda;
\end{equation}
for $k\in -\Lambda$ we can set $a_k^{(j)} = a_{-k}^{(j)}$ and the above identity still holds.

Let $\{\theta_k, k\in\mathbb{Z}^d_0\}$ be a collection of real constants such that $\theta_k=\theta_{-k}$ and satisfying suitable conditions, which will be specified later. We set
\begin{equation}\label{sec 2.2 - definition of the noise W(t,x)}
W(t,x):= \sum_{k\in\mathbb{Z}^d_0} \theta_k \Bigg(\sum_{j=1}^{d-1} a_k^{(j)}\,W_k^{(j)}(t)\Bigg) e_k(x) .
\end{equation}
From now on, whenever it doesn't create confusion, we will only write the indices $k,j$ without specifying their index sets, in order for the notation not to become too burdensome. Observe that, for fixed $t$, $W(t,\cdot)$ is already written in its Fourier decomposition and by the definitions of $a_k^{(j)}$ and $W_k(j)$ it's a real, mean zero, divergence free random distribution. It only remains to show that, for fixed $t$, $W(t,\cdot)$ belongs $\mathbb{P}$-a.s. to $L^2(\mathbb{T}^d;\mathbb{R}^d)$. Indeed, denoting by $\mathbb{E}$ the expectation with respect to $\mathbb{P}$, we have
\begin{equation}\nonumber
\mathbb{E}\Big[\,\vert W(t,\cdot)\vert_{L^2}^2\Big]
= \mathbb{E}\Bigg[ \sum_k  \theta_k^2\, \Big\vert \sum_j a_k^{(j)}W_k^{(j)}(t)\Big\vert^2\Bigg]
= 2t(d-1)\sum_k \theta_k^2
\end{equation}
and therefore, under the conditions
\begin{equation}\label{sec 2.2 - condition on the coefficients}\tag{H1}
\theta_{-k}=\theta_k\ \forall\, k,\quad \sum_k \theta_k^2<\infty,
\end{equation}
$W(t,\cdot)$ is a well defined random variable belonging to $L^2(\Omega,\mathcal{F},\mathbb{P};\mathcal{H})$. From the point of view of mathematical rigour, we should have first done the above calculation when summing over finite $k$ and then shown that, under condition \eqref{sec 2.2 - condition on the coefficients}, the sequence of finite sums is Cauchy; this can be easily checked and we omit it for the sake of simplicity. With similar calculations, exploiting Gaussianity and Kolmogorov continuity criterion, it can be shown that, as an $\mathcal{H}$-valued process, up to modification $W$ has paths in $C^{\alpha}([0,T];\mathcal{H})$ for any $\alpha<1/2$, see for instance \cite{DaP}.

%
%

We now show an alternative, more abstract construction of $W$. Let $\theta_k$ be some real coefficients satisfying \eqref{sec 2.2 - condition on the coefficients} as before and define the Fourier multiplier
\begin{equation}\nonumber
\Theta: f=\sum_k f_k\, e_k\mapsto \Theta f= \sum_k \theta_k f_k\, e_k.
\end{equation}
Then $\Theta$ is a continuous, self-adjoint operator from $H^\alpha(\mathbb{T}^d;\mathbb{R}^d)$ to itself which commutes with $\Pi$; condition \eqref{sec 2.2 - condition on the coefficients} implies that $\Theta$ is an Hilbert-Schmidt operator, namely $\Theta^\ast \Theta=\Theta^2$ is a trace class operator:
\begin{equation}\nonumber
\Theta^2: f=\sum_k f_k\, e_k\mapsto \Theta^2 f= \sum_k \theta_k^2\, f_k\, e_k.
\end{equation}
Now let $\widetilde{W}$ be a cylindrical Wiener process on $L^2(\mathbb{T}^d;\mathbb{R}^d)$ (in the sense of \cite{DaP}): a Gaussian distribution valued process with covariance
\begin{equation}\nonumber
\mathbb{E}\big[\langle \widetilde{W}_t,\varphi\rangle\,\langle \widetilde{W}_s,\psi\rangle \big]
= (t\wedge s)\langle \varphi,\psi\rangle
\quad \forall\, \varphi,\psi\in C^\infty(\mathbb{T}).
\end{equation}
Then it can be shown that, up to modifications, $\widetilde{W}$ has paths in $C^\alpha([0,T],H^\beta)$ for any $\alpha<1/2$ and for any $\beta<-d/2$. If we define
\begin{equation}\nonumber
W := \Theta\Pi \widetilde{W},
\end{equation}
then $W$ is a Wiener process on $\mathcal{H}$. This construction is useful as it shows that we can consider, on a given filtered probability space with a given noise $\widetilde{W}$, several different $W$ just by varying the deterministic operator $\Theta$. By construction, $W$ has covariance given by
\begin{equation}\nonumber\begin{split}
\mathbb{E}\big[\langle W_t,\varphi\rangle\,\langle W_s,\psi\rangle \big]
& = (t\wedge s)\langle (\Theta\Pi)^\ast\varphi,(\Theta\Pi)^\ast\psi\rangle\\
& = (t\wedge s)\langle \Theta^2\Pi\varphi,\psi\rangle
\qquad \quad \quad \forall\, \varphi,\psi\in C^\infty(\mathbb{T}).
\end{split}\end{equation}
%
%
It can be checked that $W$ defined as above is space homogeneous, namely its distribution is invariant under space translations $W(t.\cdot)\mapsto W(t,x+\cdot)$ for any $x\in\mathbb{T}^d$. This is a consequence of the fact that $\widetilde{W}$ is space homogeneous and $W$ is defined by a Fourier multiplier. For our purposes, we want it to be isotropic as well.

Let $E_\mathbb{Z}(d)$ denote the group of linear isometries of $\mathbb{R}^d$ into itself which leave $\mathbb{Z}^d$ invariant; it is the group generated by swaps
\begin{equation}\nonumber
(x_1,\ldots,x_i,\ldots,x_j,\ldots, x_d)\mapsto (x_1,\ldots,x_j,\ldots,x_i,\ldots, x_d)
\end{equation}
and reflections
\begin{equation}\nonumber
(x_1\ldots x_{i-1},x_i,x_{i+1}\ldots x_d)\mapsto (x_1\ldots x_{i-1},-x_i,x_{i+1}\ldots x_d).
\end{equation}
To see this, observe that if $O\in E_\mathbb{Z}(d)$, then for any element $e_i$ of the canonical basis it holds $Oe_i\in\mathbb{Z}^d$ and $\vert Oe_i\vert=1$, which necessarily implies that $Oe_i=\pm e_j$ for another index $j$.
$W$ is isotropic if its law is invariant under transformations $W(t,\cdot)\mapsto W(t,O\cdot)$ for all $O\in E_\mathbb{Z}(d)$ . In order to have an isotropic noise, we impose the following condition on the coefficients $\theta_k$:
\begin{equation}\label{sec 2.2 - isotropy condition}\tag{H2}
\theta_k = \theta_{Ok}\quad\, \forall\, O\in E_{\mathbb{Z}}(d).
\end{equation}
Tipical choices of $\theta_k$ will be of the form $\theta_k = F(\vert k\vert)$, where $F:\mathbb{R}_{\geq 0}\to\mathbb{R}_{\geq 0}$ is a function with sufficient decay at infinity; for instance we can take $F$ with finite support, or $F(r)=r^{-\alpha}$ for some $\alpha>d/2$. However all statements in the next section hold in general as long as \eqref{sec 2.2 - condition on the coefficients} and \eqref{sec 2.2 - isotropy condition} are satisfied.

\subsection{STLE in It\^o form and definition of energy solutions}\label{subsection 2.3 - SPDE in Ito form and definition of energy solutions}

We can now write explicitly \eqref{sec 1 - STLE in compact Stratonovich form} and find the corresponding It\^o formulation.
In order to simplify the exposition, we will do all the computations as if we were summing over a finite number of $k$ and find the right conditions under which every sum is well defined. Rigorously speaking, we should use an approximation argument and check that the finite series form a Cauchy sequence, but we skip this technical part, which can be easily verified.

%
%

Let $W$ be given as in \eqref{sec 2.2 - definition of the noise W(t,x)}, then \eqref{sec 1 - STLE in compact Stratonovich form} can be formulated as
\begin{equation}\label{sec 2.3 - equation in Stratonovich form}
\diff u = b\cdot\nabla u\diff t + \sum_{j,k} \theta_k\, e_k\, a_k^{(j)}\cdot \nabla u\circ \diff W^{(j)}_k.
\end{equation}
Equation \eqref{sec 2.3 - equation in Stratonovich form} must be interpreted in integral form: a process $u$ is a strong (from the analytical point of view) solution if $\mathbb{P}$-a.s. the following identity is satisfied for every $t,x$ (and $u$ is pogressively measurable and sufficiently regular for it to be meaningful):
\begin{equation}\nonumber\begin{split}
u(t,x)-u(0,x) = & \int_0^t b(s,x)\cdot\nabla u(s,x)\diff s\\ &+ \sum_{j,k}\theta_k \int_0^t e_k(x)\,a^{(j)}_k\cdot\nabla u(s,x)\circ \diff W_k^{(j)}(s).
\end{split}\end{equation}
Since in general Stratonovich integral is not so easy to control, we prefer to pass to the equivalent formulation in It\^o form:
\begin{equation}\nonumber\begin{split}
\diff u
& = b\cdot\nabla u\diff t + \sum_{j,k} \theta_k\, e_k\, a_k^{(j)}\cdot \nabla u \diff W^{(j)}_k + \frac{1}{2}\sum_{j,k} \theta_k\, e_k \diff \big[a_k^{(j)}\cdot\nabla u, W^{(j)}_k\big]\\
& = b\cdot\nabla u\diff t + \sum_{j,k} \theta_k\, e_k\, a_k^{(j)}\cdot \nabla u\diff W^{(j)}_k + \sum_{j,k}  \theta_k^2\, e_k\,a_k^{(j)}\cdot\nabla\Big( e_{-k}\,a_{-k}^{(j)}\cdot\nabla u\Big) \diff t\\
& = b\cdot\nabla u\diff t + \sum_{j,k} \theta_k\, e_k\, a_k^{(j)}\cdot \nabla u\diff W^{(j)}_k + \sum_{k,j} \theta_k^2\, \text{Tr}\Big(a_k^{(j)}\otimes a_k^{(j)}\, D^2u\Big)\diff t\\
& = b\cdot\nabla u\diff t + \sum_{j,k} \theta_k\, e_k\, a_k^{(j)}\cdot \nabla u\diff W^{(j)}_k + \text{Tr}\left( \Big( \sum_{k} \theta_k^2 P_k\Big) \, D^2u\right)\diff t.
\end{split}\end{equation}
In the above computation we exploited many of the properties of $a^{(j)}_k$ and $W^{(j)}_k$ highlighted in the previous section: $d[W^{(j)}_k,W^{(l)}_m] = 2\delta_{j,l}\delta_{k,-m}\,dt$, $a_k^{(j)}\cdot k=0$, $a_k^{(j)}=a^{(j)}_{-k}$. It remains to compute more explicitly the matrix appearing in the last line on the right hand side:
\begin{equation}\nonumber
\sum_k \theta_k^2 P_k
= \sum_k \theta_k^2 \left(I - \frac{k}{\vert k\vert}\otimes \frac{k}{\vert k\vert}\right)
= \left( \sum_k \theta_k^2\right) I - \sum_k \theta_k^2 \frac{k}{\vert k\vert}\otimes \frac{k}{\vert k\vert}.
\end{equation}
By the isotropy condition \eqref{sec 2.2 - isotropy condition} of $\theta_k$, whenever $i\neq j$, using the change of variables $k\mapsto \tilde{k}$ that switches the sign of the $i$-th component, we have
\begin{equation}\nonumber
\left( \sum_k \theta_k^2 \frac{k}{\vert k\vert}\otimes \frac{k}{\vert k\vert}\right)_{ij}
= \sum_k \theta_k^2\, \frac{k^{(i)}\,k^{(j)}}{\vert k\vert^2}
= \sum_{\tilde{k}} \theta_{\tilde{k}}^2\, \frac{\tilde{k}^{(i)}\,\tilde{k}^{(j)}}{\vert \tilde{k}\vert^2}
= \sum_k \theta_k^2\, \frac{-k^{(i)}\,k^{(j)}}{\vert k\vert^2} =0.
\end{equation}
Instead, when $i=j$, using a change of variables $k\mapsto \tilde{k}$ that swaps the $i$-th component with the $l$-th one, we obtain
\begin{equation}\nonumber
\left( \sum_k \theta_k^2 \frac{k}{\vert k\vert}\otimes \frac{k}{\vert k\vert}\right)_{ii}
= \sum_k \theta_k^2\, \frac{{k^{(i)}}^2}{\vert k\vert^2}
= \sum_k \theta_k^2\, \frac{{k^{(l)}}^2}{\vert k\vert^2}
= \left( \sum_k \theta_k^2 \frac{k}{\vert k\vert}\otimes \frac{k}{\vert k\vert}\right)_{ll}
\end{equation}
and therefore, summing over $i$,
\begin{equation}\nonumber
\left( \sum_k \theta_k^2 \frac{k}{\vert k\vert}\otimes \frac{k}{\vert k\vert}\right)_{ii}
= \frac{1}{d} \sum_k \theta_k^2 \frac{\vert k\vert^2}{\vert k\vert^2}
= \frac{1}{d} \sum_k \theta_k^2.
\end{equation}
In conclusion, we have obtained
\begin{equation}\label{sec 2.3 - definition of the constant c}
\sum_k \theta_k^2\, P_k = \frac{d-1}{d} \Bigg(\sum_k \theta_k^2\Bigg) I=: c\,I,
\end{equation}
so that equation \eqref{sec 2.3 - equation in Stratonovich form} has corresponding It\^o formulation
\begin{equation}\label{sec 2.3 - equation in Ito form}
\diff u = b\cdot\nabla u\diff t + c\,\Delta u\diff t + \sum_{j,k} \theta_k\, e_k\, a_k^{(j)}\cdot \nabla u\diff W^{(j)}_k.
\end{equation}
We have actually only shown that formally \eqref{sec 2.3 - equation in Stratonovich form} implies \eqref{sec 2.3 - equation in Ito form}, but the same calculations done backward show that the two formulations are equivalent, whenever $u$ is a smooth solution. Observe that condition \eqref{sec 2.2 - condition on the coefficients} on the coefficients $\theta_k$ is necessary in order to give a meaning to equation \eqref{sec 2.3 - equation in Stratonovich form}: otherwise, passing to the It\^o formulation we would find a term of the form \textquotedblleft $+\infty\,\Delta u$\textquotedblright\  which is ill-defined even in the case $u$ had very good regularity.
Let us stress that, even if writing the It\^o formulation we find a diffusion term, this is actually a \textquotedblleft fake Laplacian\textquotedblright : the nature of the equation is still hyperbolic and it can be solved by characteristics; moreover, in the case $\text{div}\, b=0$, it can be checked that the energy $\vert u\vert_{L^2}$ is (formally) invariant, while in a real diffusion it would be dissipated.
We have done the computations leading to \eqref{sec 2.3 - definition of the constant c} explicitly but we could have also derived it by the following reasoning: the matrix
\begin{equation}\nonumber
A= \sum_k \theta_k^2\, P_k
\end{equation}
is a symmetric and semipositive definite; by the isotropy condition it follows that $O^T A O = A$ for all $O\in E_{\mathbb{Z}}(d)$ and therefore necessarily $A=c I$ for some constant $c$. Indeed, if $v$ is an eigenvector for $A$, by the isotropy condition so is $Ov$, with respect to the same eigenvalue, for all $O\in E_{\mathbb{Z}}(d)$; this immediately implies that the associated eigenspace is the whole $\mathbb{R}^d$. But then taking the trace on both sides we find
\begin{equation}\nonumber
(d-1)\sum_k\theta_k^2 = dc,
\end{equation}
which gives \eqref{sec 2.3 - definition of the constant c}. This shows that the presence of $\Delta$ is strictly related to isotropy of the noise.

%
%
Since we are interested in studying weak (in the analytical sense) solutions of equation \eqref{sec 2.3 - equation in Ito form}, we need to rewrite it in a suitable way by testing against test functions in $C^\infty(\mathbb{T}^d)$. Recalling that, for any $k$ and $j$, $x\mapsto a_k^{(j)}\,e_k(x)$ is divergence free by construction, the weak formulation then corresponds to:
\begin{equation}\label{sec 2.3 - equation in Ito form, weak formulation in differential form}\begin{split}
\diff\langle u,\varphi\rangle = & -\langle u, \text{div}(b\varphi)\rangle\diff t + c\,\langle u,\Delta \varphi\rangle\diff t\\ & - \sum_{j,k} \theta_k\, \langle u, e_k\, a_k^{(j)}\cdot \nabla \varphi\rangle\diff W^{(j)}_k\quad \forall\,\varphi\in C(\mathbb{T}^d),
\end{split}\end{equation}
where as usual the above equation must be interpreted in the integral sense. In order for the term $\langle u, \text{div}(b\varphi)\rangle = \langle u, \text{div}b\,\varphi + b\nabla\varphi)\rangle$ to be well defined, we need at least to require the following assumption on $b$:
\begin{equation}\tag{A1}\label{sec 2.3 - assumption 1 on b}
b\in L^2(0,T;L^2),\quad \text{div}\,b\in L^2(0,T;L^2).
\end{equation}
It is natural in the definition of weak solution to require weak continuity in time of the solution. We denote by $C([0,T];L^2_w)$ the space of functions $f:[0,T]\to L^2$ which are continuous w.r.t. the weak topology of $L^2$, namely $f(s)\rightharpoonup f(t)$ as $s\to t$. For more details on the weak topology, we refer to \cite{Bre}. We are now ready to give with the following definition.
\begin{definition}\label{definition sec 2.3 - weak solution}
Let $(\Omega,\mathcal{F},\mathcal{F}_t,\mathbb{P})$ be a filtered probability space, with normal filtration $\{\mathcal{F}_t\}$, on which a collection $\{B_k^{(j)}, k\in\mathbb{Z}^d_0, 1\leq j\leq d-1\}$ of independent, standard $\mathcal{F}_t$-Brownian motions is defined. Let $W$ be defined as in \eqref{sec 2.2 - definition of the noise W(t,x)}, for given coefficients $\{\theta_k\}_k$ satisfying \eqref{sec 2.2 - condition on the coefficients}, \eqref{sec 2.2 - isotropy condition}. We say that an $\mathcal{F}_t$-progressively measurable, $L^2$-valued process $u$, with paths in $C([0,T];L^2_w)$, satisfying
\begin{equation}\label{sec 2.3 - condition of bounded energy in the definition of weak solution}
\int_0^T \mathbb{E}\big[\vert u(t)\vert^2_{L^2}\big]\diff t <\infty,
\end{equation}
is a \textbf{weak solution} (in the analytical sense) on the interval $[0,T]$ of the equation
\begin{equation}\label{sec 2.3 - equation in Stratonovich compact form, useful repetition}
\diff u = b\cdot\nabla u\diff t + \circ \diff W\cdot\nabla u
\end{equation}
if, for every $\varphi\in C^\infty(\mathbb{T}^d)$, $\mathbb{P}$-a.s. the following identity holds for all $t\in [0,T]$:
\begin{equation}\label{sec 2.3 - equation in Ito form, weak formulation in integral form}\begin{split}
\langle u(t),\varphi\rangle - \langle u(0),\varphi\rangle
=& -\int_0^t \langle u(s), \text{div}(b\varphi)\rangle\diff s + c\int_0^t \langle u(s),\Delta \varphi\rangle\diff s\\
& - \sum_{j,k} \theta_k\, \int_0^t \langle u(s), e_k\, a_k^{(j)}\cdot \nabla \varphi\rangle\diff W^{(j)}_k(s).
\end{split}\end{equation}
\end{definition}

In order to show that it is a good definition, we need to prove that equation \eqref{sec 2.3 - equation in Ito form, weak formulation in integral form} is meaningful. By assumption \eqref{sec 2.3 - assumption 1 on b} and condition \eqref{sec 2.3 - condition of bounded energy in the definition of weak solution}, it holds
\begin{equation}\nonumber\begin{split}
\mathbb{E}&\bigg[\bigg\vert \int_0^t \langle u(s), \text{div}(b\varphi)\rangle\diff s \bigg\vert\bigg]\\
&\leq \Vert \varphi\Vert_{W^{1,\infty}} 
\big(\Vert b\Vert_{L^2(0,T;L^2)} + \Vert \text{div}b\Vert_{L^2(0,T;L^2)}\big)
\, \sqrt{\int_0^T \mathbb{E}\big[\vert u(t)\vert^2_{L^2}\big]\diff t} <\infty.
\end{split}\end{equation}
Since $u$ is $\mathcal{F}_t$-progressively measurable, the real-valued process $t\mapsto \langle u(s), e_k\, a_k^{(j)}\cdot \nabla \varphi\rangle$ is also $\mathcal{F}_t$-progressively measurable and can be integrated with respect to $W_k^{(j)}$, for any $k$ and $j$. Therefore we only need to check that the infinite series is convergent, in a suitable sense. By It\^o isometry we have
\begin{equation}\nonumber\begin{split}
\mathbb{E}\Bigg[\Big\vert \sum_{j,k} \theta_k & \int_0^t \langle u(s), e_k\, a_k^{(j)}\cdot \nabla \varphi\rangle\diff W^{(j)}_k(s) \Big\vert^2 \Bigg]\\
& = 2 \sum_{j,k}\theta_k^2\, \mathbb{E}\Bigg[\int_0^t\vert \langle u(s),e_k\, a_k^{(j)}\cdot \nabla \varphi\rangle\vert ^2\diff s\Bigg]\\
& \leq 2\, \sup_k \theta_k^2\, \sum_{j,k} \mathbb{E}\Bigg[\int_0^T \vert\langle u(s)\,\nabla\varphi, a_k^{(j)}\,e_k\rangle\vert^2\diff s \Bigg]\\
& \leq 2\, \sup_k \theta_k^2\, \int_0^T\mathbb{E}\big[\vert u(s)\,\nabla\varphi\vert_{L^2}^2\big]\diff s\\
& \leq 2\, \sup_k \theta_k^2\, \Vert\nabla\varphi\Vert_{\infty}^2  \int_0^T\mathbb{E}\big[\vert u(s)\vert_{L^2}^2\big]\diff s
\end{split}\end{equation}
and the last term is finite since $u$ satisfies \eqref{sec 2.3 - condition of bounded energy in the definition of weak solution} and $\theta_k$ satisfy \eqref{sec 2.2 - condition on the coefficients}. In the above calculations we have exploited the fact that $\{a_k^{(j)}\, e_k, k\in\mathbb{Z}^d_0, 1\leq j\leq d-1\}$ is an (incomplete) orthonormal system in $L^2(\mathbb{T}^d;\mathbb{C}^d)$.

%
%
Let us now briefly discuss the energy balance for equation \eqref{sec 2.3 - equation in Stratonovich compact form, useful repetition}. If $u$ were a classical smooth solution of the deterministic linear transport equation
\begin{equation}\nonumber
\partial_t u = b\cdot \nabla u + v\cdot\nabla u,
\end{equation}
with $b$ as before and $v=v(t,x)$ being a divergence free vector field (both with periodic boundary condition), then we would have
\begin{equation}\nonumber\begin{split}
\frac{\diff}{\diff t}\int_{\mathbb{T}^d} u^2(t,x)\diff x
& = \int_{\mathbb{T}^d} 2\,u(t,x)(b(t,x)+v(t,x))\cdot \nabla u(t,x)\diff x\\
& = \int_{\mathbb{T}^d} (b(t,x)+v(t,x))\cdot \nabla (u^2)(t,x)\diff x\\
& = -\int_{\mathbb{T}^d} (\text{div} b)(t,x)\,u^2(t,x)\diff x\\
& \leq \Vert \text{div} b(t)\Vert_\infty \int_{\mathbb{T}^d} u^2(t,x)\diff x,
\end{split}\end{equation}
and therefore by Gronwall's lemma we would obtain
\begin{equation}\label{sec 2.3 - energy inequality, preliminary version}
\vert u(t)\vert_{L^2}^2 \leq \vert u(0)\vert_{L^2}^2\,\exp\Big\{\int_0^t \Vert \text{div} b(s)\Vert_\infty\diff s \Big\}.
\end{equation}
It is therefore natural to impose the following condition on $b$:
\begin{equation}\tag{A2}\label{sec 2.3 - assumption 2 on b}
\text{div}\,b \in L^1(0,T;L^\infty).
\end{equation}
Using the properties of Stratonovich integral (or if one prefers using a Wong-Zakai approximation technique), it can be shown that, whenever $u$ is a smooth solution of (STLE), the above calculation still holds, since by construction $W(t,\cdot)$ is divergence free. However, since equation \eqref{sec 2.3 - equation in Stratonovich compact form, useful repetition} is hyperbolic in nature, we don't expect solutions with initial data only in $L^2$ to become more regular and in this case the above reasoning doesn't hold. By approximation with smooth solutions, we can still at least expect the final inequality \eqref{sec 2.3 - energy inequality, preliminary version} to hold also for weak solutions.

%
%

The above observations lead to the following notion of energy solutions for the Cauchy problem given by \eqref{sec 2.3 - equation in Stratonovich compact form, useful repetition} and an initial condition $u_0$:
\begin{definition}\label{definition sec 2.3 - energy solution} Given a deterministic initial condition $u_0\in L^2$, we say that $u$ is an \textbf{energy solution} of the Cauchy problem
\begin{equation}\nonumber
\begin{cases} \diff u = b\cdot\nabla u\diff t + \circ \diff W\cdot\nabla u\\ u(0)=u_0
\end{cases}
\end{equation}
if $u$ is a weak solution of \eqref{sec 2.3 - equation in Stratonovich compact form, useful repetition}, equation \eqref{sec 2.3 - equation in Ito form, weak formulation in integral form} is satisfied with $u(0)=u_0$ and the following \textbf{energy inequality} holds:
\begin{equation}\label{sec 2.3 - energy inequality}
\sup_{t\in [0,T]} \Big\{e^{-1/2 \int_0^t \Vert \text{div} b(s,\cdot)\Vert_\infty\diff s}\,\vert u(t)\vert_{L^2} \Big\} \leq \vert u_0\vert_{L^2} \qquad \mathbb{P}\text{-a.s.}
\end{equation}
\end{definition}

Let us finally define what we mean by convergence in probability in abstract topological spaces. If $X_n$ is a sequence of random variables defined on the same probability space, with values in $(E,\tau,\mathcal{B}(\tau))$, where $\tau$ is a topology and $\mathcal{B}(\tau)$ is the associated Borel-$\sigma$ algebra, we say that $X_n\to X$ in probability if any subsequence of $\{X_n\}_n$ contains a subsequence which converges to $X$ $\mathbb{P}$-a.s.. We need this definition because we will work with convergence in probability in a non metrizable topology.

\section{Rigorous statement and proof of the main result}\label{section 3 - proof of the main result}
%
%
In this section we provide a rigorous statement of the main result and its proof. Throughout the section we consider a fixed, a priori given filtered probability space $(\Omega,\mathcal{F},\mathcal{F}_t,\mathcal{P})$ together with a collection $\{B_k^{(j)}, k\in\mathbb{Z}^d_0, 1\leq j\leq d-1\}$ of independent, standard $\mathcal{F}_t$-Brownian motions. However we consider different choices of the parameters $\{\theta_k\}_k$, so that we can obtain different space-time dependent noises $W(t,x)$ constructed from $\{B_k^{(j)}\}_{k,j}$ by \eqref{sec 2.2 - definition of the noise W(t,x)}. All these noises are still defined on the same probability space with respect to the same filtration; the drift $b$ is fixed. Whenever referring to energy solutions of \eqref{sec 2.3 - equation in Stratonovich compact form, useful repetition} we will therefore consider strong in the probabilistic sense solutions (i.e. progressively measurable w.r.t. $\mathcal{F}_t$) all defined on the same probability space. The main result can then be stated as follows.
%
%
\begin{theorem}\label{theorem sec 3 - main result}
Let $\{\theta_k^N, k\in\mathbb{Z}^d_0, N\in\mathbb{N} \}$ be a collection of real coefficients such that:
\begin{itemize}
\item[i)] For each $N$, $\{\theta_k^N\}_k$ satisfies \eqref{sec 2.2 - condition on the coefficients} and \eqref{sec 2.2 - isotropy condition}.
\item[ii)] It holds
\begin{equation}\label{sec 3 - condition on the coefficients in the statement of the main theorem}\tag{H3}
\lim_{N\to\infty} \frac{\sup_k (\theta_k^N)^2}{\sum_k (\theta_k^N)^2} = 0.
\end{equation}
\end{itemize}
Assume that $b$ satisfies \eqref{sec 2.3 - assumption 1 on b}, \eqref{sec 2.3 - assumption 2 on b} and the following:
\begin{itemize}
\item[(A3)] $b$  is such that, for any $\nu>0$, uniqueness holds in the class of weak $L^\infty(0,T;L^2)$ solutions of the parabolic Cauchy problem
\begin{equation}\label{sec 3 - parabolic cauchy limit problem in the statement of the main theorem}
\begin{cases} \partial_t u = \nu\Delta u + b\cdot\nabla u\\ u(0)=u_0 \end{cases}.
\end{equation}
\end{itemize}
Let $W^N$ denote the divergence free noises constructed from the coefficients $\{\theta^N_k\}_k$ as in \eqref{sec 2.2 - definition of the noise W(t,x)}. Then for any $\nu>0$ there exists a sequence of constants $\varepsilon^N$, which depend on the coefficients $\{\theta_k^N\}$, such that, for any $u_0\in L^2$, any sequence of energy solutions $u^N$ of the Cauchy problems
\begin{equation}\label{sec 3 - cauchy probelms for uN in the statement of the main theorem}
\begin{cases} \diff u^N= b\cdot\nabla u^N\diff t + \sqrt{\varepsilon^N} \circ \diff W^N\cdot\nabla u^N\\
u(0)=u_0
\end{cases}
\end{equation}
converge in probability, in $L^\infty(0,T;L^2)$ endowed with the weak-$\star$ topology, to the unique weak solution of the deterministic Cauchy problem \eqref{sec 3 - parabolic cauchy limit problem in the statement of the main theorem}.
In particular, the constants $\varepsilon^N$ can be taken as
\begin{equation}\label{sec 3 - choice of the varepsilonN in the statement of the main theorem}
\varepsilon^N = \nu\,\frac{d}{d-1}\Big(\sum_k(\theta_k^N)^2 \Big)^{-1}.
\end{equation}
\end{theorem}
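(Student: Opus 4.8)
The plan is to exploit the computation of Section~\ref{subsection 2.3 - SPDE in Ito form and definition of energy solutions}: once the scaled equation is written in It\^o form, its deterministic part is \emph{already} the parabolic limit operator, so the entire content of the theorem reduces to showing that the martingale term disappears in the limit. First I would record the It\^o formulation of the $N$-th problem. Repeating the derivation of \eqref{sec 2.3 - equation in Ito form} with $\theta_k$ replaced by $\sqrt{\varepsilon^N}\,\theta^N_k$, the It\^o--Stratonovich corrector becomes $\tfrac{d-1}{d}\varepsilon^N\big(\sum_k(\theta^N_k)^2\big)\Delta u^N$, and the choice \eqref{sec 3 - choice of the varepsilonN in the statement of the main theorem} makes this coefficient \emph{exactly} $\nu$. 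Thus every energy solution $u^N$ satisfies, for each $\varphi\in C^\infty(\mathbb{T}^d)$ and all $t$, the identity
\[
\langle u^N(t),\varphi\rangle - \langle u_0,\varphi\rangle = \int_0^t\big[-\langle u^N,\text{div}(b\varphi)\rangle + \nu\langle u^N,\Delta\varphi\rangle\big]\diff s - M^N_t(\varphi),
\]
where $M^N_t(\varphi)=\sqrt{\varepsilon^N}\sum_{j,k}\theta^N_k\int_0^t\langle u^N, e_k\, a^{(j)}_k\cdot\nabla\varphi\rangle\,\diff W^{(j)}_k$. The deterministic part is precisely the weak formulation of \eqref{sec 3 - parabolic cauchy limit problem in the statement of the main theorem}.

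Second, the energy inequality \eqref{sec 2.3 - energy inequality} provides the $N$-independent, $\mathbb{P}$-a.s.\ bound $\sup_{t\in[0,T]}|u^N(t)|_{L^2}\leq |u_0|_{L^2}\exp\{\tfrac12\int_0^T\|\text{div}\,b\|_\infty\}=:C$. Feeding this into the It\^o isometry estimate already carried out after Definition~\ref{definition sec 2.3 - weak solution}, together with Doob's maximal inequality, yields
\[
\mathbb{E}\Big[\sup_{t\in[0,T]}|M^N_t(\varphi)|^2\Big]\lesssim \varepsilon^N\sup_k(\theta^N_k)^2\,\|\nabla\varphi\|_\infty^2\,C^2 = \nu\,\frac{d}{d-1}\,\frac{\sup_k(\theta^N_k)^2}{\sum_k(\theta^N_k)^2}\,\|\nabla\varphi\|_\infty^2\,C^2,
\]
which tends to $0$ by assumption \eqref{sec 3 - condition on the coefficients in the statement of the main theorem}. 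Hence $\sup_{t}|M^N_t(\varphi)|\to 0$ in $L^2(\Omega)$, in particular in probability, for every fixed $\varphi$.

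Third, I would pass to the limit through the abstract notion of convergence in probability. Fix a countable family $\{\varphi_m\}$ dense in $C^2(\mathbb{T}^d)$. Given an arbitrary subsequence, a diagonal extraction yields a further subsequence along which $\sup_t|M^N_t(\varphi_m)|\to 0$ $\mathbb{P}$-a.s.\ simultaneously for all $m$; intersecting this with the a.s.\ energy bound gives a full-measure event $\Omega_0$ on which I argue pathwise. For $\omega\in\Omega_0$ the sequence $(u^N(\omega))$ lies in a fixed ball of $L^\infty(0,T;L^2)=\big(L^1(0,T;L^2)\big)^\ast$, on which the weak-$\star$ topology is metrizable thanks to separability of $L^1(0,T;L^2)$. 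Testing the displayed identity against $\psi(t)$ with $\psi\in C^1([0,T])$, integrating in time and letting $N\to\infty$, the time-integrated deterministic terms are weak-$\star$ continuous in $u^N$ while the martingale contributions vanish; any weak-$\star$ cluster point $\bar u$ therefore satisfies the time-integrated weak formulation of \eqref{sec 3 - parabolic cauchy limit problem in the statement of the main theorem} with datum $u_0$. By assumption (A3) such a solution is unique, so every cluster point equals the deterministic $u$; since the sequence is confined to a metrizable ball, $u^N(\omega)\rightharpoonup^\ast u$ for all $\omega\in\Omega_0$. As the initial subsequence was arbitrary, this is exactly convergence in probability in the weak-$\star$ topology, with $\varepsilon^N$ given by \eqref{sec 3 - choice of the varepsilonN in the statement of the main theorem}.

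The delicate step I expect to be the main obstacle is the identification of the limit: reconciling weak-$\star$ convergence, which is a space-time statement, with the pointwise-in-time weak formulation, and verifying that $\bar u$ genuinely belongs to the uniqueness class of (A3). The integration against $\psi$ is what bridges these two, and selecting $\psi$ with $\psi(0)\neq 0$ is precisely what transfers the initial datum $u_0$ to the limit, so that (A3) can be invoked. Everything else is a matter of bookkeeping; the \emph{surprising} passage from stochastic hyperbolic to deterministic parabolic is encoded entirely in the scaling \eqref{sec 3 - choice of the varepsilonN in the statement of the main theorem}, which tunes the corrector to $\nu\Delta$, and in \eqref{sec 3 - condition on the coefficients in the statement of the main theorem}, which annihilates the noise.
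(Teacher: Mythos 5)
Your proposal is correct and follows essentially the same route as the paper's proof: the same It\^o reformulation with $\varepsilon^N$ tuned so the corrector is exactly $\nu\Delta$, the same It\^o-isometry-plus-Doob estimate killing the martingale via \eqref{sec 3 - condition on the coefficients in the statement of the main theorem}, and the same pathwise diagonal-extraction/weak-$\star$ compactness argument closed by (A3) and the subsequence characterization of convergence in probability. Your added care in the identification step (testing against $\psi\in C^1([0,T])$ to reconcile space-time weak-$\star$ convergence with the pointwise-in-time weak formulation, and metrizability of the ball) is a minor technical refinement of the same argument, which the paper passes over more quickly.
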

%
%
\begin{proof}
%
%
The basic idea of the proof is the following: when we rewrite the transport SPDE in It\^o form, we can see that in general the It\^o-Stratonovich corrector term is well defined under more restrictive conditions than the It\^o integral. We can exploit this to our advantage by introducing a multiplicative renormalization $\sqrt{\varepsilon^N}$ under which the corrector term is uniformly bounded with respect to $N$, but then under condition \eqref{sec 3 - condition on the coefficients in the statement of the main theorem} the It\^o integrals become infinitesimal. To this aim, it is fundamental to have a uniform control on the energy of the solutions $u^N$ and that's why we work with energy solutions. We now formalize this reasoning properly.

%
%

By definition of energy solutions, we know that for each $N$ inequality \eqref{sec 2.3 - energy inequality} holds. In particular it follows that there exists a constant $K$, which only depends on $b$, such that
\begin{equation}\label{sec 3 - uniform energy estimate inside proof of the main theorem}
\sup_N \Vert u^N(\omega)\Vert_{L^\infty(0,T,L^2)}
= \sup_N \sup_{t\in [0,T]} \vert u^N(\omega,t)\vert_{L^2} \leq K\vert u_0\vert_{L^2} \quad \text{for }\mathbb{P}\text{-a.e. }\omega
\end{equation}
and
\begin{equation}\nonumber
\sup_N \int_0^T\mathbb{E}[\vert u^N(t)\vert_{L^2}^2]\diff t\leq K\vert u_0\vert_{L^2}.
\end{equation}
Rewriting the Cauchy problem in It\^o form, by the definition of energy solution we obtain that, for any $N$ and for any $\varphi\in C^{\infty}(\mathbb{T}^d)$, it holds
\begin{equation}\nonumber\begin{split}
\langle u^N(t),\varphi\rangle - \langle u_0,\varphi\rangle
=& -\int_0^t \langle u^N(s), \text{div}(b\varphi)\rangle\diff s + \varepsilon^N c^N \int_0^t \langle u^N(s),\Delta \varphi\rangle\diff s\\
& - \sqrt{\varepsilon^N}\,\sum_{j,k} \theta^N_k\, \int_0^t \langle u^N(s), e_k\, a_k^{(j)}\cdot \nabla \varphi\rangle\diff W^{(j)}_k(s),
\end{split}\end{equation}
where $c^N$ is defined as in \eqref{sec 2.3 - definition of the constant c}. With the choice \eqref{sec 3 - choice of the varepsilonN in the statement of the main theorem}, the equation becomes
\begin{equation}\label{sec 3 - identity for energy solutions uN inside proof main theorem}\begin{split}
\langle u^N(t),\varphi\rangle - \langle u_0,\varphi\rangle
=& -\int_0^t \langle u^N(s), \text{div}(b\varphi)\rangle\diff s + \nu \int_0^t \langle u^N(s),\Delta \varphi\rangle \diff s\\
& - \sqrt{\varepsilon^N}\,\sum_{j,k} \theta^N_k\, \int_0^t \langle u^N(s), e_k\, a_k^{(j)}\cdot \nabla \varphi\rangle\diff W^{(j)}_k(s).
\end{split}\end{equation}
Using estimates similar the ones of Section \ref{subsection 2.3 - SPDE in Ito form and definition of energy solutions}, it holds
\begin{equation}\nonumber\begin{split}
\varepsilon^N\, & \mathbb{E}\Bigg[ \Big(\sum_{j,k}\theta^N_k \int_0^T \langle u^N(s), e_k\, a_k^{(j)}\cdot \nabla \varphi\rangle\diff W^{(j)}_k(s)\Big)^2 \Bigg]\\
& \leq 2\varepsilon^N\, (\sup_k\theta_k^N)^2\,\Vert \nabla\varphi\Vert_{\infty}^2 \int_0^T\mathbb{E}[\vert u^N(t)\vert_{L^2}^2] \diff t\\
& \leq \widetilde{K} \Vert\nabla\varphi\Vert_{\infty}^2 \frac{(\sup_k\theta_k^N)^2}{\sum_k (\theta^N_k)^2}\to 0 \text{ as } N\to\infty
\end{split}\end{equation}
by assumption \eqref{sec 3 - condition on the coefficients in the statement of the main theorem}. Using the properties of It\^o integral and Doob's inequality, we deduce that for any fixed $\varphi$
\begin{equation}\nonumber
\sup_{t\in [0,T]} \bigg\vert\sum_{j,k}\theta^N_k\int_0^t \langle u^N(s), e_k\, a_k^{(j)}\cdot \nabla \varphi\rangle\diff W^{(j)}_k(s) \bigg\vert\to 0 \text{ in probability w.r.t. }\mathbb{P}.
\end{equation}
Let $\{\varphi_n\}_n$ be a countable dense subset of $C^\infty(\mathbb{T}^d)$; by a diagonal extraction argument, we can find a subsequence (which will still be denoted by $N$ for simplicity) and a set $\Gamma\subset \Omega$ with $\mathbb{P}(\Gamma)=1$ such that: the above process converges uniformly to 0 for every $\varphi_n$ and for every $\omega\in \Gamma$; inequality \eqref{sec 3 - uniform energy estimate inside proof of the main theorem} holds for every $\omega\in \Gamma$. Now let us consider a fixed $\omega\in \Gamma$ and the realizations $\{u^N(\omega)\}_N$. Since they are a bounded sequence $L^\infty(0,T;L^2)$, we can extract a subsequence (which depends on $\omega$) which is weak-$\star$ convergent to some $u\in L^\infty(0,T;L^2)$. Taking the limits on both sides of \eqref{sec 3 - identity for energy solutions uN inside proof main theorem}, since $\omega\in\Gamma$, we find that for every $n$
\begin{equation}\nonumber
\langle u(t),\varphi_n\rangle - \langle u_0,\varphi_n\rangle = -\int_0^t \langle u(s), \text{div}(b \varphi_n)\rangle\diff s + \nu \int_0^t \langle u(s),\Delta \varphi_n\rangle\diff s.
\end{equation}
By density we can extend the above equation for all $\varphi\in C^\infty(\mathbb{T}^d)$, so that $u$ is a weak solution of the Cauchy problem
\begin{equation}\nonumber\begin{cases}
\partial_t u = \nu\Delta u + b\cdot\nabla u\\ u(0)=u_0
\end{cases}.\end{equation}
By assumption (A3), the candidate limit is therefore unique; since the argument applies for any subsequence of $\{u^N(\omega)\}_N$, we conclude that the entire sequence is converging weakly-$\star$ to the unique solution of the above problem, without the need of selecting an $\omega$-dependent subsequence. Moreover the reasoning holds for any $\omega\in \Gamma$. Summarising, we have shown the existence of a subsequence of $\{u^N\}_N$ such that, for any $\omega\in\Gamma$, this subsequence converges in the weak-$\star$ topology of $L^\infty(0,T;L^2)$ to the unique solution of the above deterministic parabolic equation. Since the reasoning holds also for any subsequence of $\{u^N\}$, we conclude that convergence in probability in $L^\infty(0,T;L^2)$ endowed with weak-$\star$ topology holds.
\end{proof}

%
%

\begin{remark} Let us make some comments on the above result.

\begin{itemize}

\item[i)] For any $u^N$ solving \eqref{sec 3 - cauchy probelms for uN in the statement of the main theorem}, its expectation $\tilde{u}(t)=\mathbb{E}[u^N(t)]$ solves \eqref{sec 3 - parabolic cauchy limit problem in the statement of the main theorem}. Therefore the result can be expressed as the convergence in probability of $u^N$ to their mean value, which is a weak law of large numbers.

\item[ii)] Observe that whenever we consider coefficients $\{\theta^N_k\}$ satisfying \eqref{sec 3 - condition on the coefficients in the statement of the main theorem} such that $\sup_k \vert \theta^N_k\vert =1$ for all $N$ (some examples will be given shortly), the sequence of noises $W^N(t,x)$ has bounded norm in some distribution spaces, like $H^\alpha$ for $\alpha<-d/2$. Recalling that
\begin{equation}\nonumber
\mathbb{E}\big[\vert W^N(1,x)\vert_{L^2}^2\big]= 2(d-1)\sum_k(\theta_k^N)^2,
\end{equation}
we find that the constants $\varepsilon^N$ and $\nu$ must satisfy the relation
\begin{equation}\label{sec 3 - nu measures the irregularity/magnitude relation}
\nu = C(d)\lim_{N\to\infty}\varepsilon^N\,\mathbb{E}\big[\vert W^N(1,x)\vert_{L^2}^2\big],
\end{equation}
where $C(d)=2/d$ is a dimensional constant, independent of the probability space, the coefficients $\{\theta_k^N\}_{k,N}$ and the noises $W^N$ considered. Therefore the parameter $\nu$ appearing in the limit equation in front of the dissipation term $\Delta$ is measuring product of the spatial irregularity of the noise (in terms of its $L^2$ norm) and its magnitude.

\item[iii)] We illustrate some typical examples of coefficients $\theta_k^N$, widely used in other contexts, which satisfy \eqref{sec 2.2 - isotropy condition} and \eqref{sec 3 - condition on the coefficients in the statement of the main theorem}. Let $F:\mathbb{R}_{\geq 0}\to\mathbb{R}_{\geq 0}$ be a smooth, decreasing function of compact support with $F(0)=1$ and consider a sequence of positive real numbers $\alpha_N\to 0$; then we can take $\theta_k^N:= F(\alpha_N\vert k\vert)$. Other choices, for $\alpha_N$ infinitesimal, are
\begin{equation}\nonumber
\theta_k^N = (1+\alpha_N\vert k\vert^2)^\beta \text{ for some }\beta<-d/2,\quad \theta_k^N = (1+\vert k\vert^2)^{-d/2-\alpha_N}.
\end{equation}
We can also take $\theta^N_k=\mathbbm{1}_{B(0,1)}(\alpha_N\vert k\vert)$, where $\mathbbm{1}_A$ denotes the characteristic function of $A$. These examples can also be combined together to produce new ones. In terms of Fourier multipliers, some of the above examples are standard rescaled volume cutoffs in Fourier space, others correspond to operators like $(1-\alpha_N\Delta)^{-\beta}$ or $(1-\Delta)^{-d/2-\alpha_N}$.

\item[iv)] The theorem resembles a renormalization statement: different choices of the coefficients $\theta_k^N$, which can be spatial regularizations of a space-time white noise, require different multiplicative constants $\varepsilon^N$, but the final limit solves an equation which is independent of $\theta_k^N$, up to the choice of a 1-dimensional parameter $\nu$. We have however already pointed out in the introduction the presence of some degeneracy in our result. Indeed, different choices of the parameters $\theta_k^N$ can lead to very different limits for $W^N$ in terms of regularity: for instance, taking $(1+\alpha_N\vert k\vert)^{-\beta}$, the sequence will converge to white noise, while taking $(1+\vert k\vert^2)^{-1}\mathbbm{1}_{B(0,1)}(\alpha_N\vert k\vert)$ it will converge to a Gaussian free field (properly speaking, since we want divergence-free distributions, it will converge to the image under $\Pi$ of the aforementioned objects). However, in both cases the multiplicative constants $\varepsilon_N$ will still give convergence to the same limit. It is therefore unclear if the choice of such a renormalization is too strong, in the sense that it is ignoring too much information on the dynamics, and there is some more refined way to recover it, like an "higher order expansion" which not only measures the $L^2$-regularity of $W^N$ but also other norms.

\item[v)] We have required $b$ to satisfy \eqref{sec 2.3 - assumption 2 on b} in order to deal with energy solutions, but in principle the structure of the proof holds for any sequence $u^N$ of weak solutions satisfying a uniform bound of the form
\begin{equation}\nonumber
\sup_N \int_0^T \mathbb{E}[\vert u^N(s)\vert_{L^2}^2]\diff s \leq K
\end{equation}
up to paying the price of restricting ourselves to a weaker notion of convergence, namely weak convergence in $L^2(\diff \mathbb{P}\otimes \diff t; L^2)$. It's possible that more refined a priori estimates on the solutions $u^N$ provide this kind of bound under milder conditions on $b$ than \eqref{sec 2.3 - assumption 2 on b}.
\end{itemize}
\end{remark}

We now provide explicit sufficient conditions on $b$ under which assumption (A3) is satisfied. We give the statement in full generality, even when assumption \eqref{sec 2.3 - assumption 2 on b} does not hold.
\begin{lemma}\label{lemma sec 3 - sufficient condition for uniqueness of parabolic problem} Consider $b$ such that \eqref{sec 2.3 - assumption 1 on b} holds, as well as the following condition:
\begin{equation}\tag{A4}\label{sec 3 - KR type condition on b}
\begin{cases}
b\in L^{p_1}(0,T;L^{q_1}(\mathbb{T}^d))\quad
&\text{with } q_1\in (d,+\infty],\ p_1\in \big( \frac{2 q_1}{q_1 - d},+\infty\big]\\
\textnormal{div}\, b\in L^{p_2}(0,T;L^{q_2}(\mathbb{T}^d)) &\text{with } q_2\in \big(\frac{d}{2},+\infty\big],\ p_2\in \big( \frac{2 q_2}{2 q_2 - d},+\infty\big]
\end{cases}.\end{equation}
Then (A3) holds, i.e. we have uniqueness in the class of weak $L^\infty(0,T;L^2(\mathbb{T}^d))$ solutions of the Cauchy problem
\begin{equation}\label{sec 3 - parabolic Cauchy problem, useful repetition}\begin{cases}
\partial_t u = \nu\Delta u + b\cdot \nabla u\\
u(0)=u_0 \in L^2(\mathbb{T}^d)
\end{cases}.\end{equation}
\end{lemma}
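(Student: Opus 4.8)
The statement is a by-now classical energy estimate for advection--diffusion equations with Ladyzhenskaya--Prodi--Serrin type drift, and the plan is to reduce it to a Gronwall argument; the only genuine difficulty is that a generic weak $L^\infty(0,T;L^2)$ solution need not a priori lie in the energy space $L^2(0,T;H^1)$, so it cannot be tested against itself directly. By linearity of \eqref{sec 3 - parabolic Cauchy problem, useful repetition} it suffices to prove that a weak solution $w\in L^\infty(0,T;L^2)$ of $\partial_t w=\nu\Delta w+b\cdot\nabla w$ with $w(0)=0$ vanishes identically.

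The first and main step is to upgrade the regularity of $w$ to $L^2(0,T;H^1)$. I would mollify in space, setting $w^\varepsilon=\rho_\varepsilon\ast w$, so that $w^\varepsilon$ solves $\partial_t w^\varepsilon=\nu\Delta w^\varepsilon+\rho_\varepsilon\ast(b\cdot\nabla w)$ and is smooth enough in space to be tested against itself, yielding $\tfrac12\vert w^\varepsilon(t)\vert_{L^2}^2+\nu\int_0^t\vert\nabla w^\varepsilon\vert_{L^2}^2\diff s=\int_0^t\langle w^\varepsilon,\rho_\varepsilon\ast(b\cdot\nabla w)\rangle\diff s$. Writing $b\cdot\nabla w=\text{div}(bw)-(\text{div}\,b)\,w$ and commuting the mollifier past the divergence, the right-hand side splits into a principal term $-\int_0^t\langle\nabla w^\varepsilon,\rho_\varepsilon\ast(bw)\rangle\diff s$ and a zeroth order term involving $(\text{div}\,b)\,w$; the integrability of $b$ and of $\text{div}\,b$ in \eqref{sec 3 - KR type condition on b}, together with H\"older and Gagliardo--Nirenberg, is exactly what is needed to bound these uniformly in $\varepsilon$, to absorb the resulting gradient contribution into $\nu\int_0^t\vert\nabla w^\varepsilon\vert_{L^2}^2$, and to make the commutators vanish as $\varepsilon\to0$. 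This both furnishes a uniform $L^2(0,T;H^1)$ bound (hence $w\in L^2(0,T;H^1)$) and passes the energy identity to the limit. I expect this commutator/regularization analysis to be the crux of the proof: the product $bw$ only lies in some $L^{r}$ with $r<2$, so the estimates must be organised to exploit the smoothing of the $\nu\Delta$ term rather than naive bounds on $\rho_\varepsilon\ast(bw)$.

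Once $w\in L^2(0,T;H^1)$ is secured, the conclusion is routine. Integrating by parts gives $\int w\,b\cdot\nabla w=-\tfrac12\int(\text{div}\,b)\,w^2$, whence $\tfrac12\tfrac{\diff}{\diff t}\vert w\vert_{L^2}^2+\nu\vert\nabla w\vert_{L^2}^2\le\tfrac12\Vert\text{div}\,b\Vert_{L^{q_2}}\Vert w\Vert_{L^{2q_2'}}^2$ with $q_2'=q_2/(q_2-1)$. By Gagliardo--Nirenberg, $\Vert w\Vert_{L^{2q_2'}}^2\le C\vert w\vert_{L^2}^{2(1-\theta)}\Vert w\Vert_{H^1}^{2\theta}$ with $\theta=d/(2q_2)\in(0,1)$ since $q_2>d/2$; Young's inequality then absorbs a small multiple of $\vert\nabla w\vert_{L^2}^2$ into the dissipation and leaves a weight $C\Vert\text{div}\,b\Vert_{L^{q_2}}^{2q_2/(2q_2-d)}\vert w\vert_{L^2}^2$. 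The exponent $2q_2/(2q_2-d)$ is integrable in time precisely because $p_2>2q_2/(2q_2-d)$ in \eqref{sec 3 - KR type condition on b}, so Gronwall's lemma applied to $t\mapsto\vert w(t)\vert_{L^2}^2$ with $w(0)=0$ forces $w\equiv0$. Alternatively one may avoid the integration by parts and bound the drift directly by $\Vert b\Vert_{L^{q_1}}\Vert w\Vert_{L^{r_1}}\vert\nabla w\vert_{L^2}$ with $r_1=2q_1/(q_1-2)$, in which case the matching Gronwall weight $\Vert b\Vert_{L^{q_1}}^{2q_1/(q_1-d)}$ is integrable exactly by the condition $p_1>2q_1/(q_1-d)$; this is the role of the first line of \eqref{sec 3 - KR type condition on b}.
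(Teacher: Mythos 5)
Your closing Gronwall step (once $w\in L^2(0,T;H^1)$ is known) is standard and correct, but the first step --- upgrading a weak $L^\infty(0,T;L^2)$ solution to the energy class by mollification --- is exactly where the argument cannot be closed, and it is the reason the paper takes a different route. After writing $b\cdot\nabla w=\mathrm{div}(bw)-(\mathrm{div}\,b)\,w$ and mollifying, your principal term splits as
\begin{equation}\nonumber
-\langle\nabla w^\varepsilon,\,b\,w^\varepsilon\rangle\;-\;\langle\nabla w^\varepsilon,\,\rho_\varepsilon\ast(bw)-b\,w^\varepsilon\rangle .
\end{equation}
The first piece is indeed handled by H\"older, Gagliardo--Nirenberg and Young exactly as you describe. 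The obstruction is the commutator piece: under \eqref{sec 3 - KR type condition on b} the field $b$ has \emph{no} Sobolev regularity (only Lebesgue integrability of $b$ and of $\mathrm{div}\,b$), so the Friedrichs/DiPerna--Lions commutator machinery is unavailable. All one can prove is that $\rho_\varepsilon\ast(bw)-b(\rho_\varepsilon\ast w)\to 0$ in $L^r_x$ with $1/r=1/q_1+1/2$, hence $r<2$, and this convergence is purely qualitative (density of smooth $b$ plus the uniform bound $\Vert\cdot\Vert_{L^r}\leq 2\Vert b\Vert_{L^{q_1}}\vert w\vert_{L^2}$), with no rate. It must be paired against $\nabla w^\varepsilon$, which --- even granting the uniform bound you are in the course of proving --- is controlled only in $L^2$; since $r<2$, duality requires $\nabla w^\varepsilon\in L^{r'}$ with $r'>2$, and there the only available estimate is $\Vert\nabla w^\varepsilon\Vert_{L^{r'}}\lesssim\varepsilon^{-1-d/q_1}\vert w\vert_{L^2}$, which blows up. A rateless $o(1)$ multiplied by $\varepsilon^{-1-d/q_1}$ cannot be shown to vanish, and rearrangements do not help: moving the mollifier onto the other factor, or symmetrizing so as to produce $\langle\mathrm{div}\,b,\vert w^\varepsilon\vert^2\rangle$, always leaves an error term whose control requires either $w\in L^2(0,T;H^1)$ (what you are trying to prove) or a quantitative commutator bound (which would need $b\in W^{1,\cdot}$). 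So the step you yourself flag as the crux is a genuine gap, not a technical cleanup.

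The paper avoids the energy space altogether: after reducing to $u_0=0$, it shows that a weak $L^\infty(0,T;L^2)$ solution satisfies the mild (Duhamel) formulation $u(t)=\int_0^t P_{t-s}\big(\mathrm{div}(bu)-\mathrm{div}\,b\,u\big)\,\mathrm{d}s$ and proves that this map is a contraction on $L^\infty([0,T^\ast];L^2)$ for $T^\ast$ small, using the heat-kernel smoothing estimate $\vert P_{t-s}\,\mathrm{div}(bu)\vert_{L^2}\lesssim (t-s)^{-(1+\alpha)/2}\Vert bu\Vert_{L^r}$ (via Sobolev embedding with $\alpha\geq d/q_1$) together with H\"older in space and Young's convolution inequality in time; the strict inequalities $p_1>2q_1/(q_1-d)$, $p_2>2q_2/(2q_2-d)$ are precisely what make the singular kernel $(t-s)^{-(1+\alpha)/2}$ integrable against $\Vert b(s)\Vert_{L^{q_1}}$, $\Vert\mathrm{div}\,b(s)\Vert_{L^{q_2}}$. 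This argument consumes only the $L^\infty_tL^2_x$ information, which is the whole point of the lemma: as the remark following it in the paper stresses, the solutions to which it is applied arise as limits of transport equations and are not known to lie in any better class. If you wish to salvage an energy-type proof, the $L^2_tH^1_x$ regularity would itself have to be obtained from the mild formulation and heat-kernel smoothing --- at which point the contraction argument has already finished the job.
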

\begin{proof}
Without loss of generality we can assume $\nu = 1$. By linearity, it suffices to show uniqueness for $u_0=0$. We first show that $u$ is also a mild solution of \eqref{sec 3 - parabolic Cauchy problem, useful repetition}. Indeed if $u$ is a weak solution of \eqref{sec 3 - parabolic Cauchy problem, useful repetition}, then for any interval $[s,t]\subset [0,T]$ and for any $\varphi\in C^\infty([s,t]\times\mathbb{T}^d)$ it holds
\begin{equation}\nonumber\begin{split}
\langle u(t),\varphi(t)\rangle - \langle u(s),\varphi (s)\rangle & = \int_s^t \langle u(r), (\partial_t+\Delta)\varphi(r)\rangle\diff r\\& - \int_s^t \langle u(r), \text{div}(b(r)\varphi(r))\rangle\diff r.
\end{split}\end{equation}
This can be accomplished by taking a partition $s=t_0<t_1<\ldots<t_n=t$ and applying the standard weak formulation on every interval $[t_i,t_{i+1}]$ with $\varphi(t_i)$, then summing over $i$ and then letting the mesh of the partition tend to 0. Recall that the heat kernel on the torus is given by
\begin{equation}\nonumber
P_t = \sum_k e^{-t\vert k\vert^2}\, e_k.
\end{equation}
Then testing $u$ on any interval $[\delta, t]$ with $\delta>0$ against the convolution with the heat kernel $P_{t-s}$ and letting $\delta\to 0$, using $u_0=0$ we obtain the mild formulation
\begin{equation}\nonumber
u(t) = \int_0^t P_{t-s} (\nabla (bu)-\text{div} b\, u)\diff s \quad \forall\, t\in [0,T].
\end{equation}
In order to conclude it suffices to show that the map
\begin{equation}\nonumber
u\mapsto \int_0^\cdot P_{\cdot-s} (\nabla (bu)-\text{div} b\, u)\diff s
\end{equation}
is a contraction of $L^\infty([0,T^\ast],L^2(\mathbb{T}^d))$ into itself, for $T^\ast>0$ sufficiently small. If that's the case, then necessarily $u\equiv 0$ on $[0,T^\ast]$ and then we can iterate the argument to cover the whole $[0,T]$. We treat separately the two terms
\begin{equation}\nonumber
u(t)=\int_0^t P_{t-s}(\nabla(bu))\diff s - \int_0^t P_{t-s}(\text{div}b\, u)\diff s = (I)(t) + (II)(t).
\end{equation}
For the first term, using regularity of the heat kernel and the fractional Sobolev embeddings, we have
\begin{equation}\nonumber\begin{split}
\vert I(t)\vert_{L^2}
& \leq \int_0^t \vert P_{t-s}(\nabla(bu))\vert_{L^2}\diff s\\
& \leq C \int_0^t \Vert P_{t-s}(\nabla(bu))\Vert_{W^{\alpha,r}}\diff s\\
& \leq C \int_0^t (t-s)^{-(1+\alpha)/2} \Vert bu\Vert_{L^r}\diff s\\
& \leq C \Vert u\Vert_{L^\infty(0,t;L^2)} \int_0^t (t-s)^{-(1+\alpha)/2} \Vert b\Vert_{L^{q_1}}\diff s
\end{split}\end{equation}
where $\frac{1}{r}=\frac{1}{q_1}+\frac{1}{2}$ and $W^{\alpha,r}\hookrightarrow L^{\tilde{r}}$, $\frac{1}{\tilde{r}} = \frac{1}{r}-\frac{\alpha}{d}$, $\tilde{r}\geq 2$ for some $\alpha<1$ thanks to \eqref{sec 3 - KR type condition on b}. Young's convolution inequality then gives
\begin{equation}\nonumber
\Vert I\Vert_{L^\infty(0,T^\ast;L^2)}\leq C_1 \Vert u\Vert_{L^\infty(0,T^\ast;L^2)}\, \Vert b\Vert_{L^{p_1}(0,T;L^{q_1})}\, \bigg( \int_0^{T^\ast} s^{-p_1^\ast(1+\alpha)/2}\diff s\bigg)^{1/p_1^\ast},
\end{equation}
where $p_1^\ast$ denotes the conjugate exponent of $p_1$. The last quantity is finite if we can take $\alpha$ such that $p_1^\ast(1+\alpha)<2$, which is guaranteed by \eqref{sec 3 - KR type condition on b}. In the case of $(II)(t)$ the calculations are similar, with only a slight difference in the initial part; they lead to
\begin{equation}\nonumber\begin{split}
\Vert & II\Vert_{L^\infty(0,T^\ast;L^2)}\\ 
& \leq C_2 \Vert u\Vert_{L^\infty(0,T^\ast;L^2)}\, \Vert \text{div}b\Vert_{L^{p_2}(0,T;L^{q_2})}\, \bigg( \int_0^{T^\ast} s^{-p_2^\ast(1+\alpha_2)/2}\diff s\bigg)^{1/p_2^\ast}
\end{split}\end{equation}
for a suitable $\alpha_2$ such that the integral is finite. In particular, this shows that for some $T^\ast$ small enough, the map is a contraction and this concludes the proof.
\end{proof}
\begin{remark}
Up to slight modifications, it can be shown with the same type of proof that under \eqref{sec 3 - KR type condition on b}, uniqueness holds also in the class of weak solutions $u\in L^r(0,T;L^2(\mathbb{T}^d))$, with the additional condition $p_1, p_2\geq r^\ast$. Observe that condition $p_1> 2q_1/(q_1-d)$ may be rewritten as
\begin{equation}\nonumber
\frac{2}{p_1}+\frac{d}{q_1}<1,
\end{equation}
which is known in literature as Krylov-R\"ockner condition, see \cite{Kry},\cite{BecFla}.
\end{remark}

The proof of Lemma \ref{lemma sec 3 - sufficient condition for uniqueness of parabolic problem} is standard (it is a slight improvement of the one contained in \cite[Lemma 3.2]{Mau}, which is restricted to the case of time independent $b$) but we had to provide it mainly for two reasons. The first one is that a major part of the results in the literature are set in $\mathbb{R}^d$ and not in $\mathbb{T}^d$; the second and most important one is that usually uniqueness for \eqref{sec 3 - parabolic Cauchy problem, useful repetition} is proved among solutions in a more regular class, tipically $H^p_{2,q}:=L^p(0,T;W^{2,q})\cap W^{1,p}(0,T;L^q)$, see \cite{Kry2} and the appendix of \cite{Kry}. If $u$ belongs in this class, then $\nabla u\in L^\infty([0,T];L^\infty)$ and so there is no need to impose conditions on $\text{div}\,b$. Here however, since our solution $u$ is obtained as the limit of solutions of transport equations, we cannot infer that it belongs to $H^p_{2,q}$, which is why we need to impose the stronger condition \eqref{sec 3 - KR type condition on b}. Maybe further improvements can be done (for instance if both conditions \eqref{sec 2.3 - assumption 2 on b} and\eqref{sec 3 - KR type condition on b} are imposed, then \eqref{sec 2.3 - assumption 1 on b} can be dropped) but we believe the result to be fairly optimal; indeed the Krylov-R\"ockner (KR) condition arises naturally as the subcritical regime of a scaling argument and reaching the critical case (usually referred to as Ladyzhenskaya-Prodi-Serrin condition, (LPS) for short)
\begin{equation}\nonumber
\frac{2}{p_1}+\frac{d}{q_1}=1
\end{equation}
is in general very difficult and seems out of reach in a class of functions like $L^\infty(0,T;L^2)$. For more details on the topic (both the scaling argument and the critical regime) we refer to \cite{BecFla} and the references therein.
\section{Discussion of existence and uniqueness}\label{section 4 - discussion of existence and uniqueness}

%
%
In order for the statement of Theorem \ref{theorem sec 3 - main result} to be non vacuous, we discuss in this section existence and uniqueness of energy solutions, even if it is not the main aim of this paper. Existence is accomplished by a standard Galerkin scheme; regarding uniqueness, several references are given, as well as a proof in the special case $b=0$, but a full answer is missing. We stress however that the statement of the main result holds for \textit{any} sequence of energy solutions, regardless of their uniqueness; indeed the strength of the result also relies on the fact that the limit satisfies an a priori much better posed equation than the approximating sequence.\\
As in the previous section, we consider an a priori given filtered probability space $(\Omega,\mathcal{F},\mathcal{F}_t,\mathbb{P})$ with an $\mathcal{F}_t$-adapted noise $W$, namely we work in the framework of strong solutions in the probabilistic sense.
\subsection{Existence of energy solutions}\label{subsection 4.1 - existence of energy solutions}
%
%
In this subsection, the existence of energy solutions for any initial data $u_0\in L^2(\mathbb{T}^d)$ is shown. The proof is standard and based on a Galerkin approximation scheme.

First we need some preparations. Throughout the proof we will adopt the following notation: by $L^2(\diff \mathbb{P}\otimes \diff t; L^2)$ denotes the space of all $L^2$-valued, square integrable (in the Bochner sense) functions defined on $\Omega\times [0,T]$, endowed with the product $\sigma$-algebra $\mathcal{F}\otimes \mathcal{B}([0,T])$ and the product measure $\diff\mathbb{P}\otimes \diff t$. $L^2(\diff \mathbb{P}\otimes \diff t; L^2)$ is a separable Hilbert space with the scalar product
\begin{equation}\nonumber
\langle f, g\rangle = \int_0^T\mathbb{E}[\langle f(t), g(t)\rangle_{L^2}]\diff t.
\end{equation}
Morover it's reflexive and closed balls are weakly compact, due to its Hilbert space structure, see \cite[Proposition 5.1]{Bre}. Also recall that under weak continuity assumptions (which are satisfied by energy solutions by definition), $\mathcal{F}_t$-adapted processes are actually predictable, namely measurable with respect to the sub-$\sigma$-algebra $\mathcal{P}$ of predictable sets, see \cite[Porposition 3.7]{DaP}. In particular, predictable processes form a closed subspace of $L^2(\diff \mathbb{P}\otimes \diff t; L^2)$ and therefore they are also closed with respect to weak convergence.
%
%
\begin{theorem}\label{theorem sec 4.1 - existence of energy solutions}
Let $b$ satisfy \eqref{sec 2.3 - assumption 1 on b} and \eqref{sec 2.3 - assumption 2 on b}, $\{\theta_k\}_k$ satisfy \eqref{sec 2.2 - condition on the coefficients} and \eqref{sec 2.2 - isotropy condition} and $W$ be the associated divergence free noise. Then for any $u_0\in L^2$ there exists an energy solution $u$ of \eqref{sec 2.3 - equation in Stratonovich compact form, useful repetition}.
\end{theorem}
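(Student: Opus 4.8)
The plan is to construct an energy solution as a weak limit of finite-dimensional Galerkin approximations, the key point being that the divergence-free structure of the noise yields a \emph{pathwise} energy bound uniform in the approximation parameter.

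First I would set up the scheme. For each $N$ I look for $u^N$ with values in the finite-dimensional space $V_N:=\Pi_N L^2$, solving the projected It\^o equation
\[ \diff u^N = \Pi_N(b\cdot\nabla u^N)\diff t + c\,\Pi_N\Delta u^N\diff t + \sum_{j,k}\theta_k\,\Pi_N\big(e_k\,a_k^{(j)}\cdot\nabla u^N\big)\diff W_k^{(j)},\qquad u^N(0)=\Pi_N u_0, \]
with $c$ the constant of \eqref{sec 2.3 - definition of the constant c}. In terms of the Fourier coefficients $\{u^N_m\}_{|m|\le N}$ this is a linear It\^o SDE on $V_N$: its drift coefficients are time dependent and lie in $L^2(0,T)$ by \eqref{sec 2.3 - assumption 1 on b}, while the diffusion coefficients are constant operators with square-summable norms by \eqref{sec 2.2 - condition on the coefficients}. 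Standard theory for linear SDEs then gives a unique $\mathcal{F}_t$-adapted solution $u^N$ with continuous paths in $V_N$.

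The crucial step is the energy estimate, where I would exploit that each field $e_k a_k^{(j)}$ is divergence free. Applying It\^o's formula to $|u^N|_{L^2}^2$, the martingale coefficients are $\langle u^N,\Pi_N(e_k a_k^{(j)}\cdot\nabla u^N)\rangle=\langle u^N, e_k a_k^{(j)}\cdot\nabla u^N\rangle$, which vanish after integration by parts because $e_k a_k^{(j)}$ is divergence free; hence the martingale part of $\diff|u^N|_{L^2}^2$ disappears \emph{pathwise}. The It\^o corrector contributes $-2c\,|\nabla u^N|_{L^2}^2\diff t$, whereas the quadratic variation contributes $2\sum_{j,k}\theta_k^2|\Pi_N(e_k a_k^{(j)}\cdot\nabla u^N)|_{L^2}^2\diff t\le 2c\,|\nabla u^N|_{L^2}^2\diff t$ thanks to $\sum_k\theta_k^2P_k=cI$ from \eqref{sec 2.3 - definition of the constant c}; the two combine into a nonpositive term. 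What survives is the transport drift $2\langle u^N,b\cdot\nabla u^N\rangle=-\int(\text{div}\,b)(u^N)^2$, so $\diff|u^N|_{L^2}^2\le\Vert\text{div}\,b(t)\Vert_\infty\,|u^N|_{L^2}^2\diff t$ pathwise, and Gronwall gives exactly \eqref{sec 2.3 - energy inequality} for $u^N$ with a constant independent of $N$ (using $|\Pi_N u_0|_{L^2}\le|u_0|_{L^2}$). In particular $\{u^N\}$ is bounded in $L^2(\diff\mathbb{P}\otimes\diff t;L^2)$.

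Next I would pass to the limit. As $L^2(\diff\mathbb{P}\otimes\diff t;L^2)$ is a Hilbert space and the predictable processes form a weakly closed subspace, I extract a subsequence with $u^N\rightharpoonup u$ weakly, $u$ predictable. Testing the It\^o formulation against a fixed $\varphi\in C^\infty(\mathbb{T}^d)$ (and a time profile), the deterministic terms converge by weak--strong convergence, using $\text{div}(b\,\Pi_N\varphi)\to\text{div}(b\varphi)$ and $c\,\Pi_N\Delta\varphi\to c\,\Delta\varphi$ in $L^2$. For the stochastic term I would use that, by the It\^o isometry, $v\mapsto\int_0^\cdot\langle v,g\rangle\diff W_k^{(j)}$ is a bounded linear, hence weakly continuous, map between Hilbert spaces; by linearity of the equation this lets the stochastic integrals converge to the stochastic integral of $u$, with the sum over $(k,j)$ controlled by \eqref{sec 2.2 - condition on the coefficients} and the uniform energy bound. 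This identifies $u$ as a weak solution in the sense of Definition \ref{definition sec 2.3 - weak solution}. Finally, the pathwise energy inequality for $u$ follows from weak lower semicontinuity of $|\cdot|_{L^2}$ together with weak continuity in time; the latter, i.e.\ $u\in C([0,T];L^2_w)$, follows from the integral identity itself, whose right-hand side has an a.s.\ continuous modification for each $\varphi$, while progressive measurability comes from predictability. I expect the main obstacle to be precisely the passage to the limit in the stochastic integral: weak convergence is incompatible with pointwise-in-time evaluation and with nonlinear operations, so I must lean on the linearity of \eqref{sec 2.3 - equation in Stratonovich compact form, useful repetition} and on the time-integrated formulation, rather than on the tightness/Skorokhod machinery a nonlinear problem would demand.
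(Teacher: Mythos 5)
Your proposal is correct and takes essentially the same route as the paper: Galerkin projection onto Fourier modes, a pathwise energy bound exploiting the divergence-free fields $e_k a_k^{(j)}$ (with the projected quadratic variation dominated by the corrector $c\Delta$), weak compactness in $L^2(\diff\mathbb{P}\otimes\diff t;L^2)$, passage to the limit through weak continuity of the bounded linear deterministic and stochastic-integral maps on predictable processes, and finally construction of a weakly continuous version satisfying the energy inequality. The only point the paper treats more explicitly is how the a.e.\ energy bound transfers to the weak limit, namely via weak closedness of the closed convex set of processes satisfying \eqref{sec 4.1 - energy inequality inside existence theorem} (rather than lower semicontinuity alone, since weak convergence in $L^2(\diff\mathbb{P}\otimes\diff t;L^2)$ does not give $u^N(t)\rightharpoonup u(t)$ for a.e.\ $t$), after which lower semicontinuity and weak path continuity extend the bound to all times.
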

\begin{proof}
For any $N>0$, let $\Pi_N$ denote the the Fourier projector on the modes with magnitude $\vert k\vert\leq N$; define $u_0^N = \Pi_N u_0$. For any $N$, consider the following Cauchy problem:
\begin{equation}\label{sec 4.1 - Galerkin approximation system inside existence theorem}
\begin{cases}
\diff u^N = \Pi_N (b\cdot \nabla u^N)\diff t +c\Delta u^N\diff t + \Pi_N\left(\sum_{j,k} \theta_k\, e_k\, a_k^{(j)}\cdot \nabla u^N\diff W^{(j)}_k \right)\\
u^N(0)=u_0^N
\end{cases}
\end{equation}
It can be checked, by writing explicitly the Fourier decomposition, that the above system only involves the noises $W_k^{(j)}$ belonging to a finite set of indices. It is therefore a linear SDE defined on a finite dimensional space (the space of Fourier polynomials of degree at most $N$) and as such it admits a unique local solution $u^N$ with continuous paths. Moreover, the heuristic calculation regarding the energy balance done in Section \ref{subsection 2.3 - SPDE in Ito form and definition of energy solutions} in this setting is actually rigorous, since we are summing over a finite series, and therefore $u^N$ is defined on the whole $[0,T]$ and satisfies
\begin{equation}\nonumber
\sup_{t\in [0,T]}\Big\{ \vert u^N(t)\vert^2_{L^2}\, e^{-\int_0^t \Vert \text{div}b(s,\cdot)\Vert_\infty\diff s} \Big\}\leq \vert u^N_0\vert^2_{L^2} \quad\mathbb{P}\text{-a.s.};
\end{equation}
in particular, for any $N$,
\begin{equation}\label{sec 4.1 - energy inequality inside existence theorem}
\vert u^N(\omega,t)\vert^2_{L^2}\leq e^{\int_0^t \Vert \text{div}b(s,\cdot)\Vert_\infty\diff s} \vert u_0\vert^2_{L^2} \quad\text{ for }(\diff\mathbb{P}\otimes \diff t)\text{-a.e. }(\omega,t).
\end{equation}
This implies that for any $N$, equation \eqref{sec 4.1 - Galerkin approximation system inside existence theorem} has a unique solution, globally defined on $[0,T]$, and
\begin{equation}\nonumber
\int_0^T \mathbb{E}\big[\vert u^N(t)\vert_{L^2}^2\big] \diff t
\leq K \vert u_0\vert^2_{L^2},
\end{equation}
for a suitable constant $K$ which only depends on $b$. Therefore the sequence $\{u^N\}_N$ is uniformly bounded in $L^2(\diff\mathbb{P}\otimes \diff t; L^2)$ and we can assume, up to extracting a (not relabelled) subsequence, that it weakly converges to a process $u$. We now proceed to show that there exists a version of $u$ which is a weak solution of \eqref{sec 2.3 - equation in Stratonovich compact form, useful repetition} with initial data $u_0$. As recalled earlier, $u$ is a predictable process since $u^N$ are so. Fix $\varphi\in C^\infty(\mathbb{T}^d)$, then by testing $u^N$ against $\varphi$ we find
\begin{equation}\label{sec 4.1 - weak formulation for u^N inside existence theorem}\begin{split}
\langle u^N(\cdot),\varphi\rangle - \langle u^N_0,\varphi\rangle = & -\int_0^\cdot \langle u^N(s), \text{div}(b\Pi_N\varphi)\rangle\diff s + c\int_0^\cdot \langle u^N(s),\Delta \varphi\rangle\diff s\\
& - \sum_{j,k} \theta_k\, \int_0^\cdot \langle u^N(s), e_k\, a_k^{(j)}\cdot \nabla \Pi_N\varphi\rangle\diff W^{(j)}_k(s).
\end{split}\end{equation}
It's clear that $u^N_0\to u_0$ in $L^2$; the map $u(\cdot)\mapsto \langle u(\cdot),\varphi\rangle$, from $L^2(\diff\mathbb{P}\otimes \diff t; L^2)$ to $L^2(\diff\mathbb{P}\otimes \diff t)$ is linear and continuous and thus also weakly continuous (this is an immediate consequence of the definition of weak convergence). Similarly, the map from $L^2(\diff\mathbb{P}\otimes \diff t; L^2)$ to $L^2(\diff\mathbb{P}\otimes \diff t)$ given by
\begin{equation}\nonumber
u(\cdot)\mapsto \int_0^\cdot \langle u(s), \text{div}(b\varphi)\rangle\diff s
\end{equation}
is linear and continuous since
\begin{equation}\nonumber\begin{split}
\int_0^T & \mathbb{E}\Bigg[\bigg\vert \int_0^t \langle u^N(s), \text{div}(b\varphi)\rangle\diff s\bigg\vert^2\Bigg]\diff t\\
& \leq T\int_0^T \vert \text{div}(b(t)\varphi)\vert_{L^2}^2\diff t\, \int_0^T \mathbb{E}[\vert u(s)\vert_{L^2}^2]\diff t
\end{split}\end{equation}
and therefore also weakly continuous; since we also have $\text{div}(b\Pi^N\varphi)\to \text{div}(b\varphi)$ strongly, the above estimate shows that overall
\begin{equation}\nonumber
\int_0^\cdot \langle u^N(s), \text{div}(b\Pi_N\varphi)\rangle\diff s \rightharpoonup \int_0^\cdot \langle u(s), \text{div}(b\varphi)\rangle\diff s\quad \text{weakly}.
\end{equation}
A similar reasoning applies to the processes $\int_0^\cdot \langle u^N(s),\Delta\varphi\rangle\diff s$. Regarding the stochastic integrals, again we have that for fixed $\varphi$ the map from $L^2(\diff\mathbb{P}\otimes \diff t; L^2)$ to $L^2(\diff\mathbb{P}\otimes \diff t)$ given by
\begin{equation}\nonumber
u\mapsto \sum_{j,k} \theta_k\, \int_0^\cdot \langle u^N(s), e_k\, a_k^{(j)}\cdot \nabla \Pi_N\varphi\rangle\diff W^{(j)}_k(s)
\end{equation}
is linear and continuous, since by the same calculations of Section \ref{subsection 2.3 - SPDE in Ito form and definition of energy solutions} it holds
\begin{equation}\nonumber\begin{split}
\int_0^T & \mathbb{E}\Bigg[\Big\vert \sum_{j,k} \theta_k\, \int_0^t \langle u^N(s), e_k\, a_k^{(j)}\cdot \nabla \Pi_N\varphi\rangle\diff W^{(j)}_k(s)  \Big\vert^2\Bigg]\diff t\\
& \leq 2T\,\sup_k \theta_k^2\, \Vert\nabla\varphi\Vert_{\infty}^2\,\int_0^T \mathbb{E}[\vert u(s)\vert_{L^2}^2]\diff t
\end{split}\end{equation}
and as before using the fact that $\nabla\Pi_N\varphi\to\nabla\varphi$ uniformly, we also obtain weak convergence. Taking the weak limit as $N\to\infty$ on both sides of \eqref{sec 4.1 - weak formulation for u^N inside existence theorem} we conclude that $u$ satisfies 
\begin{equation}\label{sec 4.1 - weak formulation, useful repetition inside existence theorem}
\begin{split}
\langle u(\cdot),\varphi\rangle - \langle u_0,\varphi\rangle = & -\int_0^\cdot \langle u(s), \text{div}(b\varphi)\rangle\diff s + c\int_0^\cdot \langle u(s),\Delta \varphi\rangle\diff s\\
& - \sum_{j,k} \theta_k\, \int_0^\cdot \langle u(s), e_k\, a_k^{(j)}\cdot \nabla \varphi\rangle\diff W^{(j)}_k(s),
\end{split}
\end{equation}
so that $u$ is a candidate weak solution of \eqref{sec 2.3 - equation in Stratonovich compact form, useful repetition}.
%
%
We now want to show that there exists a version of $u$ which has paths in $C([0,T];L^2_w)$ and satisfies the energy inequality. Observe that the collection of processes in $L^2(\diff\mathbb{P}\otimes \diff t; L^2)$ satisfying inequality \eqref{sec 4.1 - energy inequality inside existence theorem} is a convex, closed subset. Therefore it is also weakly closed (see \cite{Bre}), which implies that inequality \eqref{sec 4.1 - energy inequality inside existence theorem} holds also for $u$.

For fixed $\varphi\in C^\infty(\mathbb{T}^d)$, by standard properties of Lebesgue integral we know that the processes $\int_0^\cdot \langle u(s), \text{div}(b\varphi)\rangle \diff s$, $\int_0^\cdot \langle u(s),\Delta \varphi\rangle \diff s$ are $\mathbb{P}$-a.s. continuous. 
Recall that by construction of the It\^o integral, for any $k$ and $j$ the process $\int_0^\cdot \langle u(s), e_k\, a_k^{(j)}\cdot \nabla \varphi\rangle\diff W^{(j)}_k(s)$ is a continuous square integrable martingale; moreover, continuous square integrable martingales are closed under $L^2(\diff \mathbb{P}\otimes \diff t)$-convergence (see \cite{Rev}) and we have already shown that the infinite series on the r.h.s. of \eqref{sec 4.1 - weak formulation, useful repetition inside existence theorem} is convergent in this norm. 
Therefore we can conclude that, for a fixed $\varphi\in C^\infty(\mathbb{T}^d)$, the process appearing on the r.h.s. of \eqref{sec 4.1 - weak formulation, useful repetition inside existence theorem} is $\mathbb{P}$-a.s. continuous in time and it coincides up to $(\diff \mathbb{P}\otimes \diff t)$-negligible sets with $\langle u,\varphi\rangle-\langle u_0,\varphi\rangle$; in particular, $\mathbb{P}$-a.s. $\langle u,\varphi\rangle$ admits a continuous version.

But then we can find, also thanks to \eqref{sec 4.1 - energy inequality inside existence theorem}, a countable dense collection $\varphi_n$ and a subset $\Gamma$ of $\Omega$ with $\mathbb{P}(\Gamma)=1$ such that for all $\omega\in\Gamma$ the following holds: $t\mapsto\langle u(\omega,t),\varphi_n\rangle$ admits a continuous version for all $n$ and there exists a set $E_\omega\subset [0,T]$ of full measure on which $\vert u(\omega,\cdot)\vert_{L^2}$ is uniformly bounded by some constant $K$. In particular, for any $t\notin E_\omega$ and any sequence $\{t_n\}_n\subset E_\omega$ such that $t_n\to t$, we can extract a subsequence such that $u(\omega, t_n)$ admits a weak limit in $L^2$, denoted by $v(\omega,t)$, whose norm is still bounded by the constant $K$. But since $\langle u(\omega,\cdot),\varphi_n\rangle$ all admit continuous versions, the limit $v(\omega,t)$ is uniquely determined and does not depend on the extracted subsequence, nor on the original sequence $\{t_n\}_n$. Reasoning in this way, for fixed $\omega\in\Gamma$, we can define $v(\omega,t)$ for all $t\in [0,T]$ and it satisfies the following: $v(\omega,t)=u(\omega,t)$ for all $t$ in a set of full Lebesgue measure; $\vert v(\omega,t)\vert_{L^2}\leq K$ for all $t\in [0,T]$; $\langle v(\omega,t),\varphi_n\rangle$ coincides with the continuous version of $\langle u(\omega,t),\varphi_n\rangle$. But then by the uniform bound and density of $\varphi_n$ it follows that the map $t\mapsto \langle v(\omega,t)\varphi\rangle$ is continuous for every $\varphi\in C^\infty(\mathbb{T}^d)$ and for every $\omega\in \Gamma$, namely $v$ is a version of $u$ with $\mathbb{P}$-a.s. weakly continuous paths. Since $v$ also satisfies \eqref{sec 4.1 - weak formulation, useful repetition inside existence theorem}, we conclude that it is a weak solution.

It only remains to show that the energy inequality holds, but this is achieved similarly by using the fact that, for all $\omega\in \Gamma$ and all $t\in \tilde{E}_\omega$, $\tilde{E}_\omega$ being a full Lebesgue measure set, inequality \eqref{sec 4.1 - energy inequality inside existence theorem}  indeed holds, and therefore by using lower semicontinuity of $\vert\cdot\vert_{L^2}$ and $v(\omega,\cdot)\in C([0,T];L^2_w)$ it can be extended to all $(\omega,t)\in \Gamma\times[0,T]$.
\end{proof}

\subsection{Proof of pathwise uniqueness in the case $b=0$}\label{subsection 4.2 - proof of strong uniqueness in the case b=0}

%
%

We prove in this section pathwise uniqueness of solutions in the case $b=0$; before proceeding further, let us mention the already existing results in the literature. Many of them are proved in $\mathbb{R}^d$ but can be easily generalized to $\mathbb{T}^d$.

A main result in the topic is the already mentioned work \cite{FlaGub}, where it is shown that for $b\in L^\infty(0,T;C^\alpha(\mathbb{R}^d;\mathbb{R}^d))$ with $\text{div}\, b\in L^p([0,T]\times\mathbb{R}^d)$, $\alpha>0$ and $p\geq 2$, in the case of a space-independent standard $d$-dim. Brownian motion, pathwise uniqueness holds for \eqref{sec 1 - STLE in compact Stratonovich form} for any $u_0\in L^\infty(\mathbb{R}^d)$; the proof is based on the existence of a sufficiently regular flow for the associated SDE. Many other results are now available, see the references in \cite{BecFla}, but tipically the noise considered is space independent or has sufficiently good space regularity. In the stochastic fluid dynamics literature, the use of divergence free transport noise of the form
\begin{equation}\nonumber
W(t,x)=\sum_{n\in\mathbb{N}} \sigma_n(x)W_n(t)
\end{equation}
appears fairly often; typical assumptions on this kind of noise are like those contained in \cite{Brz}, specifically it is required that\begin{equation}\label{sec 4.2 brz condition}
\bigg\Vert \sum_{n\in\mathbb{N}} \vert D\sigma_k(\cdot)\vert^2\bigg\Vert_{L^\infty(\mathbb{T}^d)}<\infty.
\end{equation}
In \cite{BecFla}, \eqref{sec 1 - STLE in compact Stratonovich form} is studied mainly in the case of space-independent noise, sufficiently regular initial data and $b\in L^p(0,T;L^q(\mathbb{R}^d))$, both in the subcritical (KR) and the critical (LPS) regime, using PDE arguments which do not rely on the existence of a regular flow for the associated SDE. It is stated however in Section 1.9 that all the results generalize to the case of \textquotedblleft $\sigma_n$ of class $C^4_b$ with proper summability in $n$ \textquotedblright\ such that the SDE
\begin{equation}\nonumber
\diff Y=\sum_{n\in\mathbb{N}} \sigma_n(Y)\circ \diff W_n
\end{equation}
has a sufficiently regular stochastic flow of diffeomorphisms. In particular we expect that at least an analogue requirement to \eqref{sec 4.2 brz condition} is needed; in our setting, this condition is equivalent to
\begin{equation}\label{sec 4.2 brz condition 2}
\sum_{k\in\mathbb{Z}^d}\vert k\vert^2 \theta_k^2<\infty.
\end{equation}
However, if instead of pathwise uniqueness one only requires \textit{Wiener uniqueness} of weak solutions of \eqref{sec 1 - STLE in compact Stratonovich form}, then the problem greatly simplifies. Wiener uniqueness means uniqueness in the class of processes adapted to the Brownian filtration $\mathcal{F}^W_t$ and can be established by Wiener chaos expansion techniques, see \cite{LeJ}, \cite{LeJ2} and \cite{Mau}. In particular, even if in \cite{Mau} only space-independent noise and time-independent drift are considered, the technique seems to easily adapt to our setting, for any $\{\theta_k\}$ satisfying \eqref{sec 2.2 - condition on the coefficients} and \eqref{sec 2.2 - isotropy condition} and any $b$ satisfying \eqref{sec 2.3 - assumption 2 on b}, \eqref{sec 3 - KR type condition on b}, as it fundamentally only requires wellposedness in a suitable class for the Kolmogorov equation \eqref{sec 3 - parabolic Cauchy problem, useful repetition}, which holds under the conditions of Lemma \ref{lemma sec 3 - sufficient condition for uniqueness of parabolic problem}. Wiener uniqueness is however unsatisfactory, for several reasons: if the only information on a solution $u$ is that it is adapted to $\mathcal{F}_t$, then this strategy only gives pathwise uniqueness of the process $\tilde{u}(t)=\mathbb{E}[u(t)\vert \mathcal{F}^W_t]$; Wiener uniqueness is also too weak to apply the Yamada-Watanabe theorem (which holds also in infinite dimensions, see \cite{Roc}) and ill-suited to exploit tools like Girsanov transform, see the discussion in Section 4.7 of \cite{Fla}.

The problem of passing from Wiener uniqueness to pathwise uniqueness is not only technical, because if condition \eqref{sec 4.2 brz condition} is not satisfied, the equation cannot be in general solved by means of characteristics, since phenomena like splitting and coalescence can occurr, as shown in \cite{LeJ}. In this work, Wiener uniqueness is exploited to construct Markovian statistical solutions $S_t$ which are then studied and classified; in our setting, for $b=0$ and $W$ divergence free, according to the terminology introduced in \cite{LeJ}, the statistical solution is \textit{diffusive without hitting} and is not a flow of maps (i.e. does not admit a representation by characteristics), see Theorem 10.1 (by the divergence free condition, in our setting the parameter $\eta$ is always $1$). In particular splitting can occurr, since the $2$-point motion $(X_t,Y_t)$ associated to $S_t$ starting from $(x,x)$ satisfies $X_t\neq Y_t$ for all positive $t$, see Definition 6.3. In the work \cite{LeJ2} statistical solutions are studied more in depth and it is hinted that non uniqueness can happen only in the \textit{turbulent with hitting} regime, but no explicit proof of pathwise uniqueness in the other regimes is given.

Here instead we adapt the strategy developed in \cite{Barb}, which yields a relatively simple and short proof of pathwise uniqueness in the special case $b=0$, for any $\{\theta_k\}_k$ satisfying \eqref{sec 2.2 - condition on the coefficients} and \eqref{sec 2.2 - isotropy condition}, which is a much weaker condition compared to \eqref{sec 4.2 brz condition 2}.

%
%

We focus on the SPDE
\begin{equation}\label{sec 4.2 - STLE with b=0, Ito form}
\diff u = c\Delta u\diff t + \diff W\cdot\nabla u,
\end{equation}
with $W$ as usual given by \eqref{sec 2.2 - definition of the noise W(t,x)}, $c$ defined in function of $\{\theta_k\}_k$ by \eqref{sec 2.3 - definition of the constant c}. Given $u_0\in L^2$, we consider a weak solution of \eqref{sec 4.2 - STLE with b=0, Ito form} in the sense of Definition \ref{definition sec 2.3 - weak solution}.  Following \cite{Barb}, we rewrite the equation in Fourier components. Let $u$ be given by the Fourier series
\begin{equation}\nonumber
u(t,x)=\sum_l u_l(t)\,e_l(x),
\end{equation}
so that
\begin{equation}\nonumber
\nabla u = i\sum_l l\, u_l\,e_l,
\quad
\Delta u = -\sum_l \vert l\vert^2 u_l\, e_l.
\end{equation}
Explicit calculations give the Fourier expansion for $\diff W\cdot\nabla u$:
\begin{equation}\nonumber\begin{split}
\diff W\cdot\nabla u
& = \Bigg(\sum_{j,k} \theta_k\,e_k\,a_k^{(j)}\diff W_k^{(j)} \Bigg) \cdot\left(i\sum_l l\, u_l\,e_l\right)\\
& = \sum_{j,k,l} i\,\theta_k\, a_k^{(j)}\cdot l\, u_l\, e_{k+l}\diff W_k^{(j)}\\
& = \sum_k i\Bigg(\sum_{j,l}\theta_{k-l}\, a_{k-l}^{(j)}\cdot l\, u_l\diff W^{(j)}_{k-l} \Bigg)\, e_k\\
& = \sum_k i\Bigg(\sum_{j,l}\theta_{k-l}\, a_{k-l}^{(j)}\cdot k\, u_l\diff W^{(j)}_{k-l} \Bigg)\, e_k,
\end{split}\end{equation}
where in the last passage we used the fact that $a^{(j)}_{k-l}\perp k-l$. Uniqueness of the Fourier expansion then gives the following infinite linear system of coupled SDEs for the coefficients $u_k$:
\begin{equation}\label{sec 4.2 - system of SDEs for u_k}
\diff u_k = -c\vert k\vert^2\,u_k\diff t + i \sum_{j,l}\theta_{k-l}\, a_{k-l}^{(j)}\cdot k\, u_l\diff W^{(j)}_{k-l},
\end{equation}
where as usual the identity must be interpreted in integral sense:
\begin{equation}\nonumber
u_k(t) - u_k(0) = -c\vert k\vert^2\int_0^t u_k(s)\diff s + i \sum_{j,l}\theta_{k-l}\,a_{k-l}^{(j)}\cdot k \int_0^t u_l(s)\diff W^{(j)}_{k-l}(s).
\end{equation}
The derivation of \eqref{sec 4.2 - system of SDEs for u_k} was very heuristical, but it can be checked that we would have found the same exact expression in integral form by taking $\varphi=e_k$ as test functions in \eqref{sec 2.3 - equation in Ito form, weak formulation in integral form}. Calculations similar to those of Section \ref{subsection 2.3 - SPDE in Ito form and definition of energy solutions} give
\begin{equation}\nonumber\begin{split}
\mathbb{E}\Bigg[\Big\vert \sum_{j,l}\theta_{k-l}\,a_{k-l}^{(j)}\cdot k \int_0^t u_l(s)\diff W^{(j)}_{k-l}(s) \Big\vert^2\Bigg]
& \leq K \vert k\vert^2 \int_0^T \sum_l \mathbb{E}\big[\vert u_l(s)\vert^2\big] \diff s\\
& = K \vert k\vert^2 \int_0^T \mathbb{E}\big[\vert u(s)\vert_{L^2}^2\big] \diff s.
\end{split}\end{equation}
for a suitable constant $K$, so that the infinite series in \eqref{sec 4.2 - system of SDEs for u_k} is well defined, since $u$ satisfies \eqref{sec 2.3 - condition of bounded energy in the definition of weak solution}.
To prove uniqueness, we need the following result.
%
%
\begin{lemma}\label{lemma sec 4.2 - system for averaged energies}
Let $u$ be a weak solution of \eqref{sec 4.2 - STLE with b=0, Ito form}, $u_k$ defined as above. Then the real functions $x_k$ defined by $x_k(t)=\mathbb{E}[\vert u_k(t)\vert^2]$ satisfy the following linear infinite system of coupled ODEs:
\begin{equation}\label{sec 4.2 - equation for average energies}
\dot{x}_k = -2c\vert k\vert^2 x_k + 2\sum_l \theta_{k-l}^2\, \vert P_{k-l}k\vert^2\,x_l.
\end{equation}
\end{lemma}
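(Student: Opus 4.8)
The plan is to apply the complex Itô product formula to $\vert u_k\vert^2 = u_k\overline{u_k}$, using the componentwise SDE \eqref{sec 4.2 - system of SDEs for u_k}, and then take expectations. Writing \eqref{sec 4.2 - system of SDEs for u_k} as $\diff u_k = -c\vert k\vert^2 u_k\diff t + \diff M_k$ with martingale part $M_k = i\sum_{j,l}\theta_{k-l}\,(a_{k-l}^{(j)}\cdot k)\int_0^\cdot u_l\diff W_{k-l}^{(j)}$, the $L^2$-bound established right after \eqref{sec 4.2 - system of SDEs for u_k} shows that $M_k$ is a genuine complex $L^2$-martingale, so $\mathbb{E}[M_k(t)]=0$. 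The product rule gives $\diff(u_k\overline{u_k}) = \overline{u_k}\diff u_k + u_k\diff\overline{u_k} + \diff[u_k,\overline{u_k}]$, and since the finite-variation drift does not contribute to the bracket, $\diff[u_k,\overline{u_k}] = \diff[M_k,\overline{M_k}]$.

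The deterministic drift of $\overline{u_k}\diff u_k + u_k\diff\overline{u_k}$ is $-2c\vert k\vert^2\vert u_k\vert^2\diff t$, producing the first term $-2c\vert k\vert^2 x_k$. For the bracket I would use $\overline{W_m^{(j)}} = W_{-m}^{(j)}$ together with the covariation rule $\diff[W_k^{(j)},W_m^{(l)}] = 2\delta_{j,l}\delta_{k,-m}\diff t$ recalled in Section \ref{subsection 2.2 - contruction of the noise}. Conjugating $M_k$ turns each $\diff W_{k-l'}^{(j')}$ into $\diff W_{l'-k}^{(j')}$, so the covariation of the $(j,l)$- and $(j',l')$-terms carries the factor $\delta_{j,j'}\,\delta_{k-l,\,k-l'} = \delta_{j,j'}\,\delta_{l,l'}$, which collapses the double sum to a single one. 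Combining this with $i\cdot(-i)=1$ and the factor $2$ from the bracket gives $\diff[M_k,\overline{M_k}] = 2\sum_{j,l}\theta_{k-l}^2\,(a_{k-l}^{(j)}\cdot k)^2\,\vert u_l\vert^2\diff t$. Finally, since $\{a_{k-l}^{(j)}\}_j$ is an orthonormal basis of $(k-l)^\perp$ with $P_{k-l} = \sum_j a_{k-l}^{(j)}\otimes a_{k-l}^{(j)}$, the inner sum is $\sum_j (a_{k-l}^{(j)}\cdot k)^2 = k^\top P_{k-l}\,k = \vert P_{k-l}k\vert^2$, using that $P_{k-l}$ is a symmetric idempotent.

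Collecting the two contributions and taking expectations, the martingale term drops out and I obtain the integral identity $x_k(t) = x_k(0) - 2c\vert k\vert^2\int_0^t x_k\diff s + 2\sum_l \theta_{k-l}^2\vert P_{k-l}k\vert^2\int_0^t x_l\diff s$. Differentiating in $t$ then yields \eqref{sec 4.2 - equation for average energies}; the right-hand side is continuous in $t$ (by dominated convergence using the uniform energy bound), so in fact $x_k\in C^1$ and the ODE holds everywhere.

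The points that genuinely need justification are the interchanges of $\mathbb{E}$ with the infinite sums. To move $\mathbb{E}$ inside the sum over $l$ in the drift I would invoke Tonelli together with the bound $\sum_l \theta_{k-l}^2\vert P_{k-l}k\vert^2\,\mathbb{E}[\vert u_l\vert^2]\le K\vert k\vert^2\,\mathbb{E}[\vert u\vert_{L^2}^2]$, which follows from \eqref{sec 2.3 - condition of bounded energy in the definition of weak solution} exactly as in the estimate after \eqref{sec 4.2 - system of SDEs for u_k}; the same bound ensures that the series defining $M_k$ converges in $L^2(\diff\mathbb{P})$, so that the product formula may be applied to the partial sums and passed to the limit, and that the bracket of the \emph{infinite} martingale series equals the termwise sum of brackets computed above. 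Apart from these measure-theoretic interchanges the argument is purely algebraic, so I expect the covariation bookkeeping — in particular the $\delta_{l,l'}$ collapse and the identity $\sum_j (a_{k-l}^{(j)}\cdot k)^2 = \vert P_{k-l}k\vert^2$ — to be the only delicate part.
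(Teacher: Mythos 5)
Your proposal is correct and follows essentially the same route as the paper's proof: apply the complex It\^o product rule to $u_k\overline{u}_k$, identify the drift $-2c\vert k\vert^2\vert u_k\vert^2$, compute the bracket via the rule $\diff[W_k^{(j)},W_m^{(l)}]=2\delta_{j,l}\delta_{k,-m}\diff t$ to get $2\sum_{j,l}\theta_{k-l}^2\vert a_{k-l}^{(j)}\cdot k\vert^2\vert u_l\vert^2\diff t$, take expectations to kill the martingales, and collapse the $j$-sum with $\sum_j (a_{k-l}^{(j)}\cdot k)^2=\vert P_{k-l}k\vert^2$. In fact you are more explicit than the paper on the points it glosses over (the square-integrability of the martingale part, the $\delta_{l,l'}$ collapse, and the interchange of expectation with the infinite sums), so nothing is missing.
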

\begin{proof}
Since $u$ is a weak solution, we know that $\{u_k\}_k$ satisfy system \eqref{sec 4.2 - system of SDEs for u_k}. Then applying It\^o formula, using the properties of $W^{(j)}_k$, we have
\begin{equation}\nonumber\begin{split}
\diff(\vert u_k\vert^2)
& = \diff(u_k\overline{u}_k)
= u_k\diff \overline{u}_k + \overline{u}_k\diff u_k + \diff [u_k,\overline{u}_k]\\
& = -c\vert k\vert^2 \vert u_k\vert^2\diff t + \diff M_t -c\vert k\vert^2 \vert u_k\vert^2\diff t + \diff N_t \\ & \quad + 2\sum_{j,l} \theta_{k-l}^2 \vert a_{k-l}^{(j)}\cdot k\vert^2 \vert u_l\vert^2\diff t
\end{split}\end{equation}
where $M$ and $N$ are suitable square integrable martingale starting at 0. Taking expectation their contribution disappears and we obtain
\begin{equation}\nonumber
\dot{x}_k = -2c\vert k\vert^2 x_k + 2\sum_{j,l} \theta_{k-l}^2\, \vert a_{k-l}^{(j)}\cdot k\vert^2\,x_l.
\end{equation}
Observe that, for any fixed $k$,
\begin{equation}\nonumber
\sum_j \vert a_{k-l}^{(j)}\cdot k\vert^2
= \Bigg\vert \sum_j \Big(a_{k-l}^{(j)}\otimes a_{k-l}^{(j)}\Big)k \Bigg\vert^2
= \vert P_{k-l} k\vert^2,
\end{equation}
which implies the conclusion.
\end{proof}
%
%
\begin{remark}\label{remark sec 4.2 - forward equation of markov chain} System \eqref{sec 4.2 - equation for average energies} can be written as
\begin{equation}\nonumber
\dot{x}_k = q_{kk}\,x_k + \sum_l q_{kl}\, x_l,
\end{equation}
where $q_{kk}=-2c\vert k\vert^2\in (-\infty, 0)$, $q_{kl} = 2 \theta_{k-l}^2\, \vert P_{k-l}k\vert^2\geq 0$ for $k\neq l$. Moreover, for any $k$ it holds
\begin{equation}\nonumber
\sum_l q_{kl}
= 2 \sum_l \theta_{k-l}^2\, \vert P_{k-l}k\vert^2
= 2 \sum_{\tilde{l}} \theta_{\tilde{l}}^2\, \vert P_{\tilde{l}}k\vert^2
= 2 c\vert k\vert^2 = -q_{kk},
\end{equation}
where we used the change of variables $k-l=\tilde{l}$ and the computation \eqref{sec 2.3 - definition of the constant c} from Section \ref{subsection 2.3 - SPDE in Ito form and definition of energy solutions}. Namely, system \eqref{sec 4.2 - equation for average energies} can be interpreted as the forward equation associated to a $Q$-matrix, which is the generator of a continuous time Markov process on $\mathbb{Z}^d\setminus\{0\}$; see the similarity with \cite{BarFla}. This formulation can be useful to study long-time behaviour of solutions: for instance we can deduce immediately that if $u$ is a stationary solution, then it must hold $x_k=c$ for all $k$ and so the only invariant measure with support in $L^2$ is $\delta_0$. If we expect convergence to equilibrium as $t\to\infty$, then all solutions should converge to $0$, even if energy is a formal invariant for equation \eqref{sec 4.2 - STLE with b=0, Ito form}; indeed in \cite{BarFla} anomalous dissipation of energy for a similar model is shown. Understanding whether anomalous dissipation takes place in this model will be the subject of future research.
\end{remark}
%
%
\begin{theorem}\label{theorem sec 4.2 - uniqueness for b=0} Let $\{\theta_k\}_k$ satisfy \eqref{sec 2.2 - condition on the coefficients} and \eqref{sec 2.2 - isotropy condition} and $W$ be the associated divergence free noise. If $u$ and $v$ are two weak solutions of \eqref{sec 4.2 - STLE with b=0, Ito form} with the same initial data $u_0$, then
\begin{equation}\nonumber
\mathbb{P}\big(u(t)=v(t)\big)=1\quad \forall\,t\in [0,T].
\end{equation}
\end{theorem}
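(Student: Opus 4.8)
The plan is to reduce the statement to a question about the linear system \eqref{sec 4.2 - equation for average energies} and then to exploit its positivity. By linearity of \eqref{sec 4.2 - STLE with b=0, Ito form}, the difference $w=u-v$ is again a weak solution, now with $w(0)=0$; writing $w=\sum_k w_k\,e_k$ and setting $x_k(t)=\mathbb{E}[|w_k(t)|^2]\ge 0$, Lemma \ref{lemma sec 4.2 - system for averaged energies} shows that $\{x_k\}$ solves the forward system \eqref{sec 4.2 - equation for average energies} with $x_k(0)=0$, while the energy bound \eqref{sec 2.3 - condition of bounded energy in the definition of weak solution} gives $\int_0^T\sum_k x_k(t)\,dt<\infty$. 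Since $x_k\equiv 0$ forces $w_k(t)=0$ $\mathbb{P}$-a.s. for each $k$, and hence (a countable intersection) $\mathbb{P}(w(t)=0)=1$ for every fixed $t$, the whole theorem reduces to the deterministic claim that the only nonnegative, summable solution of \eqref{sec 4.2 - equation for average energies} with vanishing initial datum is $x\equiv 0$.

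To prove this I would pass to the Laplace transform in time. Fixing $\lambda>0$ and setting $\hat x_k(\lambda)=\int_0^T e^{-\lambda t}x_k(t)\,dt\ge 0$, integration by parts in \eqref{sec 4.2 - equation for average energies} together with $x_k(0)=0$ yields, in the notation $q_{kl}=2\theta_{k-l}^2|P_{k-l}k|^2$ of Remark \ref{remark sec 4.2 - forward equation of markov chain},
\begin{equation}\nonumber
(\lambda+2c|k|^2)\,\hat x_k = \sum_{l\neq k} q_{kl}\,\hat x_l - e^{-\lambda T}x_k(T)\ \le\ \sum_{l\neq k} q_{kl}\,\hat x_l ,
\end{equation}
the boundary term $-e^{-\lambda T}x_k(T)\le 0$ being harmless, indeed favourable. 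Dividing, summing over $k$, and using $|P_{k-l}k|=|P_{k-l}l|$ (since $P_{k-l}(k-l)=0$) together with Tonelli's theorem (all terms are nonnegative, so no absolute convergence is needed) gives
\begin{equation}\nonumber
\sum_k \hat x_k \ \le\ \sum_l \hat x_l\,\rho_\lambda(l),\qquad \rho_\lambda(l):=\sum_{k\neq l}\frac{2\,\theta_{k-l}^2\,|P_{k-l}l|^2}{\lambda+2c|k|^2}.
\end{equation}
If one can establish a bound of the form $\sup_l \rho_\lambda(l)\le 1$ (possibly after weighting the $\ell^1$ norm), then $\|\hat x\|_{\ell^1}$ absorbs into itself and $\hat x\equiv 0$; since the integrand $e^{-\lambda t}x_k(t)$ is nonnegative, this forces $x_k=0$ for a.e.\ $t$, and the $L^2(\Omega)$-continuity of $t\mapsto w_k(t)$ upgrades this to every $t\in[0,T]$.

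The hard part is precisely the estimate on $\rho_\lambda(l)$, uniformly in $l$. The exact cancellation behind \eqref{sec 2.3 - definition of the constant c} reads $\sum_{k\neq l}q_{kl}=2c|l|^2$, so replacing every weight $(\lambda+2c|k|^2)^{-1}$ by the single value $(2c|l|^2)^{-1}$ would give $\rho_\lambda(l)\approx 1$; the whole difficulty is that the low-frequency terms $|k|\ll|l|$ carry a \emph{larger} weight. This is the analytic face of the possible explosivity (equivalently, anomalous dissipation) of the Markov chain of Remark \ref{remark sec 4.2 - forward equation of markov chain}, and of the fact that \eqref{sec 4.2 - STLE with b=0, Ito form} carries only a "fake Laplacian" providing no $H^1$ gain with which to absorb a flux arriving from infinity. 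What should rescue the argument is the geometry of the projectors: the weight $|P_{k-l}l|^2$ is maximal exactly when $k-l\perp l$ and vanishes when $k-l\parallel l$, so the mass of $\rho_\lambda(l)$ concentrates on displacements transverse to $l$, for which $|k|\gtrsim|l|$ and the weight $(\lambda+2c|k|^2)^{-1}$ is no larger than near $|k|\approx|l|$.

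Turning this heuristic into the clean inequality is the delicate point. In the self-adjoint picture of Remark \ref{remark sec 4.2 - forward equation of markov chain}, where $-Q$ is the reversible graph Laplacian with symmetric rates $q_{kl}=q_{lk}$, the same obstruction reappears as the requirement that the nonnegative $\ell^1$ function $\hat x$ lie in the form domain of $-Q$, so that the summation by parts producing the nonnegative Dirichlet form $\tfrac12\sum_{k,l}q_{kl}(\hat x_k-\hat x_l)^2\ge 0$ is licit and may be combined with the resolvent identity above to conclude $\hat x\equiv 0$. This high-frequency control is exactly the step that separates the pathwise uniqueness sought here from the much softer Wiener uniqueness discussed earlier, and I expect it to be the crux of the whole proof.
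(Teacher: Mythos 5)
Your first paragraph is a correct reduction and coincides exactly with the paper's starting point: pass to $w=u-v$, apply Lemma \ref{lemma sec 4.2 - system for averaged energies} to $x_k=\mathbb{E}[|w_k|^2]$, and use \eqref{sec 2.3 - condition of bounded energy in the definition of weak solution} to get $\int_0^T\sum_k x_k(t)\diff t<\infty$. The gap is everything after that. Your argument hinges on the absorption estimate on $\rho_\lambda$, and you never prove it; note also that you need the \emph{strict} bound $\rho_\lambda(l)<1$ for every $l$, since $\sup_l\rho_\lambda(l)\le 1$ alone only yields the vacuous inequality $\sum_k\hat x_k\le\sum_k\hat x_k$. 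Worse, the estimate is simply false for small $\lambda$: take $d=2$ and $\theta_j=1$ for $|j|=1$, $\theta_j=0$ otherwise (this satisfies \eqref{sec 2.2 - condition on the coefficients} and \eqref{sec 2.2 - isotropy condition}, with $c=2$); then for $l=(n,n)$ a direct computation gives
\begin{equation}\nonumber
\rho_0(l)=\frac{n^2}{2n^2+2n+1}+\frac{n^2}{2n^2-2n+1}=\frac{4n^4+2n^2}{4n^4+1}>1 ,
\end{equation}
so $\rho_\lambda(l)>1$ for all small $\lambda>0$. This is precisely the low-frequency problem you describe, but describing it is not overcoming it: one might hope that a large $\lambda$ (or a weighted $\ell^1$ norm) repairs this, yet a bound uniform in $l$ and valid for \emph{every} admissible family $\{\theta_k\}$ is not at all obvious --- the cheap geometric bound $|P_{k-l}k|^2\le|k|^2$ only yields $\rho_\lambda(l)\le\frac{d}{d-1}>1$ --- and your closing paragraph on Dirichlet forms is a second heuristic, not a proof. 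So the proposal consists of a correct reduction plus an unproven, delicate quantitative claim that is exactly as hard as the theorem itself.

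The paper closes the argument with a purely qualitative maximum principle that bypasses any such estimate, and it may be instructive to compare. Fix $t$, integrate the system in time and set $A_k=\int_0^t x_k(s)\diff s$, so that $x_k(t)+2c|k|^2A_k=\sum_l q_{kl}A_l$. Summability of $\{A_k\}$ gives $A_k\to0$, hence a maximizer $k_1$. Evaluating at $k_1$ and using the \emph{exact} row-sum identity $\sum_l q_{k_1l}=2c|k_1|^2$ (a consequence of \eqref{sec 2.3 - definition of the constant c}, as in Remark \ref{remark sec 4.2 - forward equation of markov chain}; no inequality and no resolvent parameter enter) forces equality in $\sum_l q_{k_1l}A_l\le(\max_l A_l)\sum_l q_{k_1l}$, whence $x_{k_1}(t)=0$ and $A_l=\max_m A_m$ for every $l$ with $q_{k_1l}>0$. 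The isotropy condition \eqref{sec 2.2 - isotropy condition} then guarantees a connected site $k_2$ with $|k_2|>|k_1|$, and iterating produces a sequence $k_n\to\infty$ along which $A_{k_n}=\max_m A_m$; since $A_k\to0$, the maximum is $0$, so all $A_k$ vanish and $w\equiv0$. In short: where you try to beat the operator norm of the off-diagonal part of the resolvent equation, the paper exploits that the row sums are exactly critical and lets the maximum escape to infinity, where summability kills it. If you wish to salvage the Laplace-transform route you must supply the uniform strict bound on $\rho_\lambda$ for general $\{\theta_k\}$; as it stands, your proof is incomplete at its crux.
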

\begin{proof}
By linearity of equation \eqref{sec 4.2 - STLE with b=0, Ito form}, $w:= u-v$ is a weak solution with initial data $w_0=0$. In order to conclude it suffices to show that $\mathbb{P}(w(t)=0)=1$ for every $t$. Recall that by the definition of weak solution, $u$ and $v$ satisfy \eqref{sec 2.3 - condition of bounded energy in the definition of weak solution} and therefore also $w$ does. By Lemma \ref{lemma sec 4.2 - system for averaged energies}, we know that $x_k=\mathbb{E}[\vert w_k\vert^2]$ satisfy \eqref{sec 4.2 - equation for average energies} with initial condition $x_k(0)=0$ for all $k$, namely
\begin{equation}\nonumber
x_k(t) = -2c \vert k\vert^2 \int_0^t x_k(s)\diff s + 2\sum_l \theta_{k-l}^2 \vert P_{k-l} k\vert^2 \int_0^t  x_l(s) \diff s.
\end{equation}
Fix $t\in [0,T]$ and define $A_k=\int_0^t x_k(s)\diff s$, then the above equation becomes
\begin{equation}\label{sec 4.2 - proof of main theorem, internal equation}
x_k(t) + 2c\vert k\vert^2 A_k= 2\sum_l \theta_{k-l}^2 \vert P_{k-l} k\vert^2 A_l.
\end{equation}
Condition \eqref{sec 2.3 - condition of bounded energy in the definition of weak solution} implies that $A_k$ is summable:
\begin{equation}\nonumber
\sum_k A_k
= \sum_k \int_0^t \mathbb{E}[\vert w_k(s)\vert^2]\diff s
\leq \int_0^T \mathbb{E}[\vert w(s)\vert_{L^2}^2]\diff s<\infty,
\end{equation}
so that $A_k\to 0$ as $k\to\infty$; in particular $\{A_k\}_k$ admits a maximum, say at $A_{k_1}$. Since $x_{k_1}(t)\geq 0$ by construction, we find
\begin{equation}\nonumber\begin{split}
2c\vert k_1\vert^2 A_{k_1}
& \leq x_{k_1}(t) + 2c\vert k_1\vert^2 A_{k_1}
= 2\sum_l \theta_{k_1-l}^2 \vert P_{k_1-l} k_1\vert^2 A_l\\
& \leq 2\max_l A_l \sum_l \theta_{k_1-l}^2 \vert P_{k_1-l} k_1\vert^2
= 2c\vert k_1\vert^2 A_{k_1},
\end{split}\end{equation}
which implies that all the inequalities are equalities and therefore $x_{k_1}(t)=0$, $A_l=A_{k_1}$ for all $l$ such that $\theta_{k_1-l}\neq 0$ and $P_{k_1-l}k_1\neq 0$ (i.e. $l\notin \langle k\rangle$). We are now going to show that we can construct inductively a sequence $k_n$ such that $k_n\to\infty$ and $A_{k_n}=\max_l A_l$. Recall that $\{\theta_k\}_k$ satisfy the isotropy condition \eqref{sec 2.2 - isotropy condition}, so if $\theta_{\overline{j}}\neq 0$ for some $\overline{j}$, then $\theta_{O\overline{j}}\neq 0$ as well. Let $\Gamma=\{O\overline{j}, O\in E_{\mathbb{Z}}(d)\}$. Then we can find $j\in\Gamma$ such that $k_2:=k_1-j$ satisfies $k_2\notin\langle k_1\rangle$ and $\vert k_2\vert> \vert k_1\vert$; since $\theta_{k_1-k_2}=\theta_j\neq 0$ we conclude that $A_{k_2}=\max_l A_l$. But then we can iterate the reasoning, this time starting from $A_{k_2}$, to find $k_3$ such that $\vert k_3\vert>\vert k_2\vert$ and $A_{k_3}=\max_l A_l$, and so on. In this way we find the desidered sequence $\{k_n\}_n$; but $A_l\to 0$ as $l\to\infty$, which implies that $\max_l A_l=0$ and so $A_l=0$ for all $l$.
Since
\begin{equation}\nonumber
0=\sum_l A_l = \int_0^t \mathbb{E}[\vert w(s)\vert^2_{L^2}]\diff s
\end{equation}
and the reasoning holds for any $t\in [0,T]$, we obtain the conclusion.
\end{proof}
\begin{remark}\label{remark sec 4.2 - advantages and disadvantages of this line of proof}
Let us underline both the advantages and the disadvantages of the approach we used. On one side,  the proof could be further generalised: if we had a weaker concept of solution for which the derivation of system \eqref{lemma sec 4.2 - system for averaged energies} is still rigorous (in principle system \eqref{sec 4.2 - equation for average energies} is well defined under assumption $\{x_k\}_k\in l^\infty$), then in order for the proof to work we only need to guarantee that $\{A_k\}\in c_0$ (i.e. $\{A_k\}\in l^\infty$ and $A_k$ infinitesimal), which could be deduced under milder conditions than \eqref{sec 2.3 - condition of bounded energy in the definition of weak solution}. The proof also holds for the inhomogeneous equation with an external forcing $f$, since the difference of two solutions of the inhomogeneous system is a solution of the homogenous one.\\
On the other side, the proof is not easily generalizable on domains different than $\mathbb{T}^d$ and completely breaks down when treating the case $b\neq 0$. In fact, we are not able to obtain a closed equation for $\mathbb{E}[\vert u_k\vert^2]$ as in Lemma \ref{lemma sec 4.2 - system for averaged energies} anymore; it's still possible to find a closed system of ODEs for the terms $x_{k,l}=\mathbb{E}[u_k\overline{u}_l]$, but it's not as nice as \eqref{sec 4.2 - equation for average energies}. The simplification obtained by finding a closed equation for the "diagonal" terms $x_{k,k}=\mathbb{E}[\vert u_k\vert^2]$ is the key in our method of proof.
\end{remark}

We immediately obtain the following corollary.

\begin{corollary}\label{corollary sec 4.2 - pathwise uniqueness} The following hold.
\begin{itemize}

\item[i)] (Pathwise uniqueness) Let $u$ be a weak solution of \eqref{sec 4.2 - STLE with b=0, Ito form} with initial data $u_0$. Then $u$ is the unique energy solution of \eqref{sec 4.2 - STLE with b=0, Ito form} with initial data $u_0$ (up to indistinguishability).

\item[ii)] (Stability) Let $u$ and $v$ be two weak solutions with respect to initial data $u_0$ and $v_0$. Then
\begin{equation}\nonumber
\Vert u-v\Vert_{L^\infty(0,T;L^2)}  \leq \vert u_0-v_0\vert_{L^2}\quad \mathbb{P}\text{-a.s.}
\end{equation}

\end{itemize}
\end{corollary}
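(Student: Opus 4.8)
The plan is to reduce both statements to the fixed-time uniqueness already granted by Theorem~\ref{theorem sec 4.2 - uniqueness for b=0}, combined with the existence result Theorem~\ref{theorem sec 4.1 - existence of energy solutions} and the energy inequality \eqref{sec 2.3 - energy inequality}. For part~(i), I would first note that $b=0$ trivially satisfies \eqref{sec 2.3 - assumption 1 on b} and \eqref{sec 2.3 - assumption 2 on b}, so Theorem~\ref{theorem sec 4.1 - existence of energy solutions} provides an energy solution $\tilde u$ with the same initial datum $u_0$. Both $u$ and $\tilde u$ are weak solutions of \eqref{sec 4.2 - STLE with b=0, Ito form} with datum $u_0$, so Theorem~\ref{theorem sec 4.2 - uniqueness for b=0} gives $\mathbb{P}(u(t)=\tilde u(t))=1$ for every fixed $t\in[0,T]$. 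It then remains only to upgrade this pointwise-in-time equality to indistinguishability, after which $u$ inherits \eqref{sec 2.3 - energy inequality} from $\tilde u$ and is therefore the unique energy solution.

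The upgrade is where I would be most careful. By Definition~\ref{definition sec 2.3 - weak solution} both processes have paths in $C([0,T];L^2_w)$, so $t\mapsto\langle u(\omega,t),\psi\rangle$ and $t\mapsto\langle\tilde u(\omega,t),\psi\rangle$ are continuous for every $\psi\in L^2$. Fixing a countable dense set of times $\{t_n\}_n\subset[0,T]$ and a countable family $\{\varphi_m\}_m$ dense in $L^2$, the pointwise statement provides a single event $\Gamma$ with $\mathbb{P}(\Gamma)=1$ on which $\langle u(\omega,t_n),\varphi_m\rangle=\langle\tilde u(\omega,t_n),\varphi_m\rangle$ for all $n,m$ simultaneously. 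For $\omega\in\Gamma$ and each fixed $m$, the two continuous functions $t\mapsto\langle u(\omega,t),\varphi_m\rangle$ and $t\mapsto\langle\tilde u(\omega,t),\varphi_m\rangle$ agree on the dense set $\{t_n\}$, hence on all of $[0,T]$; letting $m$ vary and using density of $\{\varphi_m\}$ in $L^2$ forces $u(\omega,t)=\tilde u(\omega,t)$ in $L^2$ for all $t\in[0,T]$. This is exactly indistinguishability. The delicate point, though routine, is precisely that the null sets must be chosen uniformly in $t$, which is what the countable dense families and weak path-continuity make possible.

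Part~(ii) then follows from part~(i) by linearity. Given weak solutions $u$ and $v$ with data $u_0$ and $v_0$, part~(i) identifies each of them with the corresponding unique energy solution. The difference $w:=u-v$ solves the same linear equation \eqref{sec 4.2 - STLE with b=0, Ito form} with datum $w_0=u_0-v_0$ and is again a weak solution, so part~(i) applied to $w$ shows that $w$ coincides with the energy solution for $w_0$ and hence satisfies \eqref{sec 2.3 - energy inequality}. Since $b=0$ makes the exponential weight in \eqref{sec 2.3 - energy inequality} identically $1$, the inequality reads $\sup_{t\in[0,T]}\vert w(t)\vert_{L^2}\le\vert w_0\vert_{L^2}$ $\mathbb{P}$-a.s., that is $\Vert u-v\Vert_{L^\infty(0,T;L^2)}\le\vert u_0-v_0\vert_{L^2}$ almost surely, as claimed.
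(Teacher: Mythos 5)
Your proposal is correct and follows essentially the same route as the paper: invoke existence (Theorem \ref{theorem sec 4.1 - existence of energy solutions}) to produce an energy solution $\tilde u$, apply Theorem \ref{theorem sec 4.2 - uniqueness for b=0} at a countable dense set of times, upgrade to indistinguishability via the weak continuity of paths in $C([0,T];L^2_w)$, and deduce (ii) by linearity plus the energy inequality \eqref{sec 2.3 - energy inequality} with $b=0$. The only cosmetic difference is that the paper works with rational times and states the weak-continuity upgrade in one line, whereas you spell it out with a countable dense family of test functions; the content is identical.
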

\begin{proof}
i) Let $u$ be as in the hypothesis and $\tilde{u}$ be an energy solution of \eqref{sec 4.2 - STLE with b=0, Ito form} with initial data $u_0$. Then by Theorem \ref{theorem sec 4.2 - uniqueness for b=0}
\begin{equation}\nonumber
\mathbb{P}(u(t)=\tilde{u}(t)\ \ \forall\, t\in [0,T]\cap\mathbb{Q})=1
\end{equation}
and we conclude that $u$ and $\tilde{u}$ are indistinguishable, since $u$ and $\tilde{u}$ both have $\mathbb{P}$-a.s. continuous paths in the weak topology.

ii) By linearity, $u-v$ is a weak solution with initial data $u_0-v_0$. But then it is the unique energy solution and satisfies the energy inequality \eqref{sec 2.3 - energy inequality} with $b=0$, which gives the conclusion.
\end{proof}

Also observe that we are in the conditions to apply Yamada-Watanabe theorem and therefore not only pathwise but also uniqueness of strong solutions in the probabilistic sense holds; this also implies uniqueness in law.

\section{The case $d=1$}\label{section 5 - the case d=1}
%
%
The proof of our main result required two fundamental features: the use of an incompressible noise, namely $W=W(t,x)$ such that at any fixed time $\text{div}_x W(t,\cdot)=0$ in the sense of distributions, and the existence of weak solutions satisfying a uniform energy bound; the existence of such solutions was a consequence of suitable conditions on $b$ and incompressibility of $W$. However, in the case of spatial dimension $d=1$, the divergence-free condition is equivalent to $W$ being space-independent, which doesn't allow to take a sequence of noises which are increasingly rougher in space; it is therefore unclear whether it's possible to obtain an analogue of Theorem \ref{theorem sec 3 - main result}. One might still look for a suitable sequence of not divergence-free noises, for which existence of energy solutions can be shown, and such that in the limit they converge in a suitable sense to a deterministic PDE. In this section we show that, while this program can still partially be carried out, in the limit we do not expect to find a PDE with a diffusion term.

%
%

Let us briefly introduce the notation and setting of this section. We consider the one dimensional torus $\mathbb{T}=\mathbb{R}/(0,2\pi)$, with periodic boundary condition. However, since in one dimension there is no real advantage in working with complex series, we prefer in this case to restrict ourselves to the space of real valued, $2\pi$-periodic, $L^2$ functions, with the normalised inner product
\begin{equation}\nonumber
\langle f,g\rangle = \frac{1}{2\pi}\int_\mathbb{T} f(x)g(x)\diff x.
\end{equation}
A complete orthonormal system for $L^2(\mathbb{T})$ is given by the real Fourier basis $\{e_k\}_{k\in\mathbb{Z}}$:
\begin{equation}\nonumber
e_k(x)=\begin{cases}
\sqrt{2}\cos(kx)\qquad & \text{if } k>0\\
1 & \text{ if } k=0\\
\sqrt{2}\sin(kx) & \text{ if } k<0
\end{cases}.
\end{equation}
Throughout this section, we will assume $\{W_k\}_{k\in\mathbb{Z}}$ to be a sequence of \textit{real} independent Brownian motions. In order to understand which kind of noise to use, let us start by considering the deterministic transport equation
\begin{equation}\label{sec 5 - starting deterministic pde}
\partial_t u = b\, \partial_x u,
\end{equation}
where we need to assume at least $b=b(t,x)$ to be in $L^1(0,T;L^2)$ with $\text{div}\, b = \partial_x b \in L^1(0,T;L^\infty)$, namely $b\in L^1(0,T; W^{1,\infty})$. In the 1-d framework, under such conditions on $b$, equation \eqref{sec 5 - starting deterministic pde} is already well posed by the DiPerna-Lions theory, see \cite{DiPLio}. We highlight this aspect as it provides a partial explanation of the fact that in 1-d there doesn't seem to be much space for regularization by space-time dependent noise (even if some results have been obtained, see \cite{Ges} and the references therein): the deterministic theory is already well posed under sufficiently mild conditions and therefore too "competitive" for the noise to perform better. Anyway, observing that for a given smooth deterministic $v=v(t,x)$, the linear PDE
\begin{equation}\nonumber
\partial_t u = 2v\,\partial_x u + \partial_x v\, u
\end{equation}
is formally energy preserving, since
\begin{equation}\nonumber
\frac{\diff}{\diff t}\int_\mathbb{T} u^2\diff x
= \int_\mathbb{T} \big(4 v\,u\,\partial_x u + 2\partial_xv\, u^2\big)\diff x
= 2\int_\mathbb{T} \partial_x (v\, u^2)\diff x = 0,
\end{equation}
it seems reasonable to consider a perturbation of \eqref{sec 5 - starting deterministic pde} of the form
\begin{equation}\label{sec 5 - linear spde, compact stratonovich formulation}
\diff u = b\, \partial_x u\diff t + 2\circ \diff W\, \partial_x u + \circ \diff(\partial_x W)\, u,
\end{equation}
where $W=W(t,x)$ is a suitable noise which will be described later. As before, $\circ \diff W$ denotes Stratonovich integration with respect to the time parameter; observe that, if $W$ is a distribution valued Wiener process, then $\partial_x W$ is again a  a distribution valued Wiener process and therefore, under proper conditions, Stratonovich integration also with respect to it can be defined. This will become more transparent once we describe $W$ explicitly. By the properties of Stratonovich integral (in particular the chain rule) and the above computation, we expect formally to obtain the same energy balance as for equation \eqref{sec 5 - starting deterministic pde} and therefore the existence of weak solutions satisfying the energy inequality
\begin{equation}\nonumber
\sup_{t\in [0,T]} \Big\{ e^{-1/2 \int_0^t \Vert \partial_x b(s)\Vert_\infty \diff s}\, \vert u(t)\vert_{L^2} \Big\} \leq \vert u_0\vert_{L^2} \quad \mathbb{P}\text{-a.s.}
\end{equation}
Observe however that we have already found a criticality with respect to the approach of the previous sections: in equation \eqref{sec 5 - linear spde, compact stratonovich formulation}, not only Stratonovich multiplicative noise $\circ \diff W$ appears, but also $\circ \diff(\partial_x W)$; if the former, in order to be defined, required $W$ to be an $L^2$-valued random variable, then the latter will require $\partial_x W$ to be $L^2$-valued as well and therefore $W$ to belong to $H^1$. In particular, the class of noises we can use has one additional degree of regularity with respect to the one we could use in higher dimension. If we expect the paradigm \textquotedblleft the rougher the noise, the better the regularization\textquotedblright\ to hold, then this kind of noise shouldn't be able to regularize very much.

%
%

Let us give a more precise description of $W$, so that we can give proper meaning to equation \eqref{sec 5 - linear spde, compact stratonovich formulation} and pass to the It\^o formulation. Similarly to the previous sections, we consider $W$ given by
\begin{equation}\nonumber
W(t,x)=\sum_k \sigma_k(x) W_k(t),
\end{equation}
where $\sigma_k\in C^\infty(\mathbb{T})$ for each $k$ and the index $k$ is ranging over $\mathbb{Z}$. In particular, we might consider both a finite or infinite series (in the latter case, convergence is interpreted in the sense of distributions as before). Then, $\partial_x W$ in the sense of distributions is given by
\begin{equation}\nonumber
\partial_x W(t,x)=\sum_k \sigma'_k(x) W_k(t),
\end{equation}
so that we can write equation \eqref{sec 5 - linear spde, compact stratonovich formulation} as
\begin{equation}\label{sec 5 - linear spde, explicit stratonovich formulation}
\diff u
= b\, \partial_x u\diff t + \sum_k \big( 2\sigma_k \partial_x u + \sigma_k'\, u\big)\circ \diff W_k
= b\, \partial_x u\diff t + \sum_k \mathcal{M}_k u\circ \diff W_k,
\end{equation}
where we consider $\mathcal{M}_k = 2\sigma_k \partial_x + \sigma_k'$ as a linear (unbounded) operator on $L^2(\mathbb{T})$. From \eqref{sec 5 - linear spde, compact stratonovich formulation} we obtain the corresponding It\^o formulation:
\begin{equation}\nonumber
\diff u = b\, \partial_x u\diff t +\frac{1}{2} \sum_k \mathcal{M}_k^2 u\diff t + \sum_k \mathcal{M}_k u\diff W_k.
\end{equation}
Algebraic computations yield
\begin{equation}\label{sec 5 - linear spde, general ito formulation}\begin{split}
\diff u = b\, \partial_x u\diff t
& + \Bigg(\sum_k \sigma_k \sigma''_k + \frac{1}{2} (\sigma_k')^2\Bigg) u\diff t
+ 4 \Bigg(\sum_k \sigma_k \sigma'_k \Bigg)\partial_x u\diff t\\
&+ 2\Bigg(\sum_k \sigma_k^2 \Bigg)\partial_{xx} u\diff t
+ \sum_k \mathcal{M}_k u\diff W_k.
\end{split}\end{equation}
In particular, in order for the It\^o-Stratonovich corrector to make sense, we need all the above series to be convergent at any fixed $x$. Moreover, observe that now the term in front of $\partial_{xx}u$ is in \textquotedblleft competition\textquotedblright\ with those in front of $\partial_x u$ and $u$; in particular, when we renormalise the noise $W$ by dividing by the term which explodes faster, if the other terms don't grow with the same speed they will disappear in the limit. This is indeed what happens and what determines the failure of Theorem \ref{theorem sec 3 - main result} in $d=1$, at least when the perturbation is performed by a linear multiplicative noise of the form \eqref{sec 5 - linear spde, compact stratonovich formulation}.

%
%

We illustrate what described above for specific choices of $\sigma_k$ which allow to perform explicit calculations. Take $\sigma_k(x) = \lambda_k\, e_k(x)$, where $\lambda_k$ are some real constants (on which we need to impose conditions, see below) such that $\lambda_k = \lambda_{-k}$ for all $k$, $\lambda_0=0$ and $\{e_k\}_k$ is the real Fourier basis introduced at the beginning of the section. Then equation \eqref{sec 5 - linear spde, general ito formulation} becomes
\begin{equation}\label{sec 5 - linear spde, specific ito formulation}\begin{split}
\diff u
& = b\, \partial_x u \diff t -\frac{1}{2}\Bigg( \sum_k k^2 \lambda_k^2\Bigg) u\diff t + 2\Bigg( \sum_k \lambda_k^2\Bigg)\partial_{xx}u\diff t\\
&\quad+ \sum_k \lambda_k \big( 2 e_k \partial_x u + e_k' u\big) \diff W_k .
\end{split}\end{equation}
Equation \eqref{sec 5 - linear spde, specific ito formulation} confirms the discussion above: the It\^o-Stratonovich corrector in order to be defined requires the condition
\begin{equation}\nonumber
\sum_k k^2 \lambda_k^2 <\infty,
\end{equation}
namely $W$ taking values in $H^1$, and the coefficient in front of $u$ is strictly bigger than the one in front of $\partial_{xx}u$, at least whenever $\lambda_k\neq 0$ for some $k\notin \{-1,1\}$. Let us write the weak formulation of equation \eqref{sec 5 - linear spde, specific ito formulation} in order to understand if the infinite series of It\^o integrals is well defined as well and how fast it grows as a function of the parameters $\lambda_k$: $u$ is a weak solution if, for any $\varphi\in C^\infty(\mathbb{T})$,
\begin{equation}\label{sec 5 - specific ito formulation, weak formulation}\begin{split}
\diff \langle u,\varphi \rangle =
& - \langle u,\partial_x (b \varphi) \rangle\diff t
- \frac{1}{2}\Bigg( \sum_k k^2 \lambda_k^2\Bigg) \langle u,\varphi\rangle \diff t
+ 2\Bigg( \sum_k \lambda_k^2\Bigg)\langle u,\partial_{xx}\varphi\rangle\diff t\\
& - 2 \sum_k \lambda_k \langle u, e_k\partial_x \varphi\rangle\diff W_k
- \sum_k \lambda_k \langle u, e'_k\varphi\rangle\diff W_k
\end{split}\end{equation}
Let us consider the last two series separately (in principle we have already committed an abuse by splitting the series, since we haven't yet proved its convergence, but once convergence is proven the passage is rigorous; otherwise just consider the finite approximations first, for which the splitting is legit, and push to the limit after the convergence of both series is proven). By It\^o isometry and independence of $\{W_k\}_k$, for any $t\in [0,T]$ we have
\begin{equation}\nonumber\begin{split}
\mathbb{E}\Bigg[ \bigg\vert \sum_k \lambda_k \int_0^t \langle u(s), e_k\partial_x\varphi\rangle\diff W_k(s)\bigg\vert^2 \bigg]
& = \sum_k \lambda_k^2 \int_0^t \mathbb{E}[\langle u(s)\partial_x\varphi, e_k\rangle^2]\diff s\\
& \leq \sup_k \lambda_k^2 \int_0^T \mathbb{E}\bigg[\sum_k \langle u(s)\partial_x\varphi,e_k\rangle^2\bigg]\diff s\\
& = \sup_k \lambda_k^2 \int_0^T \mathbb{E}\big[\,\vert u(s)\partial_x\varphi\vert_{L^2}^2\big]\diff s\\
& \leq \sup_k \lambda_k^2\, \Vert \partial_x\varphi\Vert_\infty^2\, \int_0^T \mathbb{E}\big[\,\vert u(s)\vert_{L^2}^2\big]\diff s
\end{split}\end{equation}
Therefore the first series grows as $\sup_k \vert \lambda_k\vert$ as in the case $d\geq 2$. For the second series with analogous calculations we find
\begin{equation}\nonumber
\mathbb{E}\Bigg[ \bigg\vert \sum_k k\lambda_k \int_0^t \langle u(s), e_k\varphi\rangle\diff W_k(s)\bigg\vert^2 \Bigg]
\leq \sup_k\big\{ k^2 \lambda_k^2\big\}\, \Vert \varphi\Vert_\infty^2 \int_0^T \mathbb{E}\big[\,\vert u(s)\vert_{L^2}^2\big]\diff s,
\end{equation}
which shows that the second series grows in norm as $\sup_k \{\vert k\lambda_k\vert\}$, which is therefore also the leading term of the overall martingale term appearing in equation \eqref{sec 5 - specific ito formulation, weak formulation}. In particular observe that
\begin{equation}\nonumber
\sum_k \lambda_k^2 \leq \sup_k \{k\lambda_k\}^2\, \sum_{k\neq 0} \frac{1}{k^2} = C\,\sup_k \{k\lambda_k\}^2,
\end{equation}
which shows that it's not possible to renormalize $W$ in such a way that in the limit the coefficient in front of $\partial_{xx}u$ survives while the martingale term disappears. The above inequality only holds in dimension $d=1$ and fails for $d$ higher, proving once again that dimension is playing a fundamental role and we can't infer for $d=1$ the same results as for $d\geq 2$. However, it's still easy to find a collection $\{\lambda_k^N, k\in\mathbb{Z}, N\in\mathbb{N}\}$ such that $\lambda^N_k = \lambda^N_{-k}$, $\lambda_0^N=0$ for all $N$ and
\begin{equation}\nonumber
\lim_{N\to\infty}\frac{\sup_k \{k^2(\lambda^N_k)^2\}}{\sum_k k^2(\lambda^N_k)^2}=0,
\qquad \lim_{N\to\infty}\frac{\sum_k (\lambda_k^N)^2}{\sum_k k^2(\lambda^N_k)^2}=0,
\end{equation}
where we are of course assuming that for fixed $N$ all the quantities appearing are finite. For a given such sequence, if we define the noises $W^N=W^N(t,x)$ as
\begin{equation}\nonumber
W^N(t,x)= \sum_k \lambda^N_k e_k(x) W_k(t)
\end{equation}
and for a fixed $\nu>0$ we define
\begin{equation}\nonumber
\varepsilon_N = 2\nu \bigg( \sum_k k^2(\lambda^N_k)^2\bigg)^{-1},
\end{equation}
then $\varepsilon_N\to 0$ as $N\to\infty$ and going through the same proof as in Theorem \ref{theorem sec 3 - main result} we obtain that any weak energy solution of equation
\begin{equation}\nonumber
\begin{cases}
\diff u^N = b\, \partial_x u^N\diff t + 2\sqrt{\varepsilon_N}\circ \diff W^N\, \partial_x u^N + \sqrt{\varepsilon_N}\circ \diff (\partial_x W^N)\, u^N\\
u^N(0)=u_0
\end{cases}
\end{equation}
will converge in probability as $N$ goes to infinity to $u$ deterministic solution of
\begin{equation}\label{sec 5 - expected deterministic pde limit}
\begin{cases}
\partial_t u = b\, \partial_x u - \nu\, u\\
u(0)=u_0
\end{cases},
\end{equation}
as long as the weak solution of \eqref{sec 5 - expected deterministic pde limit} is unique on the interval $[0,T]$. If uniqueness of \eqref{sec 5 - expected deterministic pde limit} fails, then the proof of Theorem \ref{theorem sec 3 - main result} breaks down as well; we may still however extract a (not relabelled) subsequence $u^N$ which converges weakly in $L^2(\Omega\times[0,T]\times\mathbb{T},\diff\mathbb{P}\otimes \diff t\otimes \diff x)$ to a stochastic solution of \eqref{sec 5 - expected deterministic pde limit}, usually referred to as a \textit{superposition solution}, see \cite{Amb}, \cite{Fla3} ; moreover by properties of weak convergence, this solution still satisfies the energy inequality. We have however lost the main advantage of Theorem \ref{theorem sec 3 - main result}, where the sequence $u^N$ converges to a deterministic PDE which is in principle much better posed than the approximating sequence, due to the presence of the Laplacian.

%
%

Such a result may still be seen, from the modelling point of view, as a mathematical justification of the presence of a friction term $-\nu u$ in one dimensional PDEs, as the ideal limit of the action of a suitable noise $W$ which is very irregular but of very small intensity; the coefficient $\nu$ is proportional to the product between the magnitude and the spatial irregularity of the noise (measured by its $H^1$ norm). We underline however that a noise of the form \eqref{sec 5 - linear spde, compact stratonovich formulation} has been introduced only for mathematical convenience (it allows to obtain an energy inequality for the solutions) and has not been justified from the physical point of view, nor equation \eqref{sec 5 - linear spde, compact stratonovich formulation} has been derived from first principles (namely from a Lagrangian formulation). Therefore there is still the possibility that the addition of a different multiplicative noise, with a more robust modelling justification, allows to obtain an analogue of Theorem \ref{theorem sec 3 - main result} also in dimension $d=1$.

\begin{acknowledgements}
This work stems from my master thesis (see \cite{Gal}, Section 4.4) which was developed under the supervision of Prof. David Barbato, to whom I'm deeply indebted and I want to express my gratitude. I also want to thank Prof. Franco Flandoli for the very useful discussions and for encouraging me into writing this work, as well as Prof. Dejun Luo for pointing out a mistake in early calculations and showing me a very simple and elegant way to find the explicit expression for the It\^o-Stratonovich term. I'm also grateful to Prof. Massimiliano Gubinelli for reviewing the early draft of this work and to Immanuel Zachhuber and Lorenzo Dello Schiavo for the useful suggestions for the proof of Lemma \ref{lemma sec 3 - sufficient condition for uniqueness of parabolic problem}.
\end{acknowledgements}
%
%
%



%
\end{document}